\newcommand{\norm}[2]{\left\Vert {#1} \right\Vert_{#2}}
\DeclareMathOperator{\dive}{div}
\def\eps{{\varepsilon}}
\def\O{\Omega}
\def\R{\mathbb{R}}
\def\F{\mathcal{F}}
\def\HH{\mathcal{H}}
\newcommand{\be}{\begin{equation}}
\newcommand{\ee}{\end{equation}}
\newcommand{\loc}{\mathrm{loc}}
\numberwithin{equation}{section}
\theoremstyle{plain}
\newtheorem{theo}{Theorem}[section]
\newtheorem{lemma}[theo]{Lemma}
\newtheorem{prop}[theo]{Proposition}
\newtheorem{definition}[theo]{Definition}
\theoremstyle{remark}
\newtheorem{remark}[theo]{Remark}
\def\XXint#1#2#3{{\setbox0=\hbox{$#1{#2#3}{\int}$ }
\vcenter{\hbox{$#2#3$ }}\kern-.6\wd0}}
\title[Dimension reduction in optimization problem with measure constraint]{On the dimension of the singular set\\ in optimization problems with measure constraint} %of solutions to free boundary problems\\ with measure constraint}
\author[D.~Mazzoleni, G.~Tortone, B.~Velichkov]{Dario Mazzoleni, Giorgio Tortone, Bozhidar Velichkov }
\address {Dario Mazzoleni \newline \indent
Dipartimento di Matematica, University of Pavia \newline \indent
Via Ferrata 5, 27100 Pavia, Italy}
\email{dario.mazzoleni@unipv.it}
\address {Giorgio Tortone \newline \indent
	Dipartimento di Matematica, Universit\`a di Pisa \newline \indent
	Largo Bruno Pontecorvo, 5, 56127 Pisa, Italy}
\email{giorgio.tortone@dm.unipi.it}
\address {Bozhidar Velichkov \newline \indent
Dipartimento di Matematica, Universit\`a di Pisa \newline \indent
Largo Bruno Pontecorvo, 5, 56127 Pisa, Italy}
\email{bozhidar.velichkov@unipi.it}
\begin{document}

\thanks{{\bf Acknowledgments.}
	G.T. and B.V. are supported by the European Research Council's (ERC) project n.853404 ERC VaReg - \it Variational approach to the regularity of the free boundaries\rm, financed by the program Horizon 2020.
	D.M. and G.T. are members of INDAM-GNAMPA.
	D.M. has been partially supported by the MIUR-PRIN Grant 2020F3NCPX
``Mathematics for industry 4.0 (Math4I4)''.
D.M. and B.V. have been partially supported by the MIUR-PRIN Grant 2022R537CS ``$NO^3$''.}

\subjclass[2020] {35R35, 49Q10, 35B65, 49N60, 35N25}
\keywords{Free boundary regularity, optimal shapes, one-phase Bernoulli problem, global stable solutions, dimension reduction, critical dimension}

\begin{abstract}
In this paper, we prove estimates on the dimension of the singular part of the free boundary for solutions to shape optimization problems with measure constraints. The focus is on the heat conduction problem studied by Aguilera, Caffarelli, and Spruck and the one-phase Bernoulli problem with measure constraint introduced by Aguilera, Alt and Caffarelli. To estimate the Hausdorff dimension of the singular set, we introduce a new formulation of the notion of stability for the one-phase problem along volume-preserving variations, which is preserved under blow-up limits. Finally, the result follows by applying the program developed in \cite{BMMTV} to this class of domain variation.
\end{abstract}

\maketitle
\setcounter{tocdepth}{1}
%\tableofcontents

%%%%%%%%%%%%%%%%%%%%%%%%%%%%%%%%%%%%%%%%%%%%%%%%%%

{\centering\footnotesize
  \it Dedicated to Giuseppe Buttazzo, with admiration and gratitude for his mentorship and friendship.\\
  \par}

%%%%%%%%%%%%%%%%%%%%%%%%%%%%%%%%%%%%%%%%%%%%%%%%%%

\section{Introduction}\label{sintro}
In this paper, we prove estimates on the dimension of the singular part of the solutions to free boundary problems with measure constraint.
Although in many physical or engineering problems involving free boundaries a measure constraint naturally arises, the mathematical theory of regularity for these problems has focused mostly on functionals made of a Dirichlet type energy summed to a measure term (see for example~\cite{altcaf}), and thus with no measure constraint.

Here we focus on improving two regularity results for free boundary problems with measure constraints. Precisely, we consider the one-phase Bernoulli problem with measure constraint introduced by Aguilera, Alt and Caffarelli in \cite{agalca} (see \cref{sub:intro-agalca}) and the heat conduction problem of Aguilera, Caffarelli and Spruck \cite{agcasp} (see \cref{sub:intro-agcasp}). In both cases we prove that the full regularity of the free boundary can be pushed up to a critical dimension $d^\ast\in \{5,6,7\}$ (see \cref{sub:intro-critical-dimension} and the discussion in \cite[Section 7]{BMMTV}), while, for $d\ge d^\ast$, we give an estimate on the dimension of the singular set via a dimension reduction principle. In particular, we show that in dimensions $d=2$, $d=3$ and $d=4$, the free boundaries are smooth.

\subsection{The critical dimension $d^\ast$}\label{sub:intro-critical-dimension}
For any open set $D\subset \R^d$ and every function $u\in H^1(D)$, let $J$ be the one-phase Alt-Caffarelli functional
$$J(u,D)=\int_D|\nabla u|^2\,dx+|\{u>0\}\cap D|.$$
We say that a non-negative $u:D\to\R$ is a minimizer of the one-phase functional in $D$ if
$$J(u,D)\le J(v,D)\quad\text{for every}\quad v\in H^1(D)\quad\text{such that}\quad v-u\in H^1_0(D),$$
and we say that a non-negative $u:\R^d\to\R$ is a global minimizer of the one-phase functional if
$$J(u,D)\le J(v,D)\quad\text{for every open}\quad D\subset\R^d\quad\text{and every}\quad v\in H^1(D)\quad\text{with}\quad v-u\in H^1_0(D).$$

\noindent In \cite{weiss}, Weiss showed that if $u$ is a minimizer of the one-phase functional in an open set $D\subset\R^d$, then any blow-up limit of $u$ is a 1-homogeneous global minimizer. This allowed to apply the Federer's dimension reduction principle to minimizers of the one-phase functional showing that:
\begin{itemize}
\item if $d<d^\ast_{min}$, then any minimizer $u$ of $J$ in $D\subset \R^d$ has $C^{1,\alpha}$ free boundary;
\item if $d=d^\ast_{min}$, then a minimizer $u$ of $J$ in $D\subset \R^d$ might have singularities, but they have to be isolated points;
\item if $d>d^\ast_{min}$, then the set of singular free boundary points of a minimizer $u$ of $J$ (in any open $D\subset \R^d$) must have Hausdorff dimension at most $d-d^\ast_{min}$,
\end{itemize}
where $d^\ast_{min}$ is the smallest dimension $d$ in which there is a 1-homogeneous global minimizer $u:\R^d\to\R$ of the one-phase functional with a singularity in zero.
In view of the above results of Weiss, the estimate on the dimension of the singular set of minimizers to the one-phase problem is reduced to finding the critical dimension $d^\ast_{min}$. \medskip

%It is known from the earlier work of Alt and Caffarelli that $d^\ast\ge 3$.
In \cite{cjk}, Caffarelli, Jerison and Kenig used a geometric argument to show that if $u:\R^d\to\R$ is a global 1-homogeneous minimizer with free boundary $\partial\{u>0\}$, which is smooth outside of the origin, then on the positivity set $\Omega_u:=\{u>0\}$  the following \emph{stability inequality} holds:
$$\int_{\Omega_u}|\nabla \varphi|^2\,dx\ge \int_{\partial\Omega_u}|H|\varphi^2\quad\text{for every}\quad \varphi\in C^\infty_c(\R^d\setminus\{0\}),$$
where $H$ is the mean curvature of the boundary $\partial\Omega_u$. Then, they exploited this information to show that $d^\ast_{min}\ge 4$. Later, De Silva and Jerison~\cite{dsj} found an example of a $1$-homogeneous global minimizer with singularity in zero in dimension $7$, thus showing that $d^\ast_{min}\le 7$. Finally, Jerison and Savin~\cite{js} studied functions $u:\R^d\to\R$ satisfying
\begin{itemize}
\item $u$ is non-negative, Lipschitz and 1-homogeneous;
\item $\Delta u=0$ in the open set $\Omega_u:=\{u>0\}$;
\item $\partial\Omega_u\setminus\{0\}$ is $C^\infty$ smooth and $|\nabla u|\equiv1$ on $\partial\Omega_u\setminus\{0\}$;
\item $\Omega_u$ supports the stability inequality of Caffarelli-Jerison-Kenig.
\end{itemize}
By exploiting an argument similar to the one used by Simons for minimal surfaces, they showed that if $u$ is a function satisfying these conditions in dimension $d\le 4$, then $u$ is (the positive part of) a linear function. Thus, they showed that there are no singular minimizers of the one-phase functional in dimension $d\le 4$, which means that $d^\ast_{min}\ge 5$.\medskip

In \cite{BMMTV}, with Buttazzo and Maiale, we studied the regularity of the free boundary of a shape optimization problem for which the analysis cannot be reduced to the study of minimizers or almost-minimizers of the one-phase functional. Precisely, the optimality condition in \cite{BMMTV} could only be expressed in terms of interior variations with smooth vector fields, which made impossible the application of the dimension estimates of Weiss. Thus, we introduced the notion of \emph{global stable solution} of the one-phase Bernoulli problem (see  \cref{def:global-stable-solutions} for the precise definition), which allowed to prove a dimension reduction principle. We also gave the following definition of critical dimension $d^\ast$ for global stable solutions: \emph{$d^\ast$ is the smallest dimension $d$ in which there is a $1$-homogeneous global stable solution $u:\R^d\to\R$ with isolated singularity in zero.} Precisely:
\begin{itemize}
\item If $u$ is a $1$-homogeneous global stable solution in $\R^d$, with $d<d^\ast$, then necessarily
\begin{equation}\label{e:half-plane}
u(x)=(x\cdot\nu)_+\quad\text{for some unit vector}\quad\nu\in\R^d.
\end{equation}
\item In $\R^{d^\ast}$ there is a $1$-homogeneous global stable solution $u$ which is not of the form \eqref{e:half-plane}; the free boundary $\partial\{u>0\}$ is a smooth manifold in $\R^{d^\ast}\setminus\{0\}$ and has a singularity in $0$.
\end{itemize}
Again in \cite{BMMTV}, we showed that the positivity set $\Omega_u$ of a global stable solution $u$ supports the Caffarelli-Jerison-Kenig's stability inequality. Thus, by applying the result of Jerison and Savin, we were able to conclude that $5\le d^\ast\le 7$.\medskip

In the present paper, we use the theory from \cite{BMMTV} to obtain dimension bounds on the singular part of the free boundary for solutions to shape optimization problems with measure constraints. The focus is on the problems studied by Aguilera-Caffarelli-Spruck and Aguilera-Alt-Caffarelli, which we briefly discuss in the subsections \ref{sub:intro-agcasp} and \ref{sub:intro-agalca} below. We notice that, just as in \cite{BMMTV}, the optimization problem of Aguilera-Caffarelli-Spruck cannot be written as a variational minimization problem of a single integral functional. Here, the additional difficulty with respect to \cite{BMMTV} is the presence of a measure constraint, which reduces the class of admissible perturbations.

\subsection{Optimization problem in heat conduction}\label{sub:intro-agcasp}
Let $D$ be a smooth bounded open set in $\R^d$ and let
$$\phi:\partial D\to\R \qquad\text{and}\qquad \varphi:\partial D\to\R$$
be smooth functions on $\partial D$, which are bounded from below and above by positive constants $0<c\le C$. Let $\Omega\subset D$ be an open set, for which the set $K:= D\setminus \Omega$ is a compact subset of $D$. For every such $\Omega$, we denote by  $u_\Omega\in H^1(D)$ the solution to the problem
\begin{equation}\label{e:heat-conduction-u-Omega}
\Delta u_\Omega=0\quad\text{in}\quad\Omega\,,\qquad u_\Omega=\varphi\quad\text{on}\quad \partial D\,,\qquad u_\Omega\equiv 0\quad\text{on}\quad K=D\setminus\Omega\,,
\end{equation}
and we set
\be\label{e:F2}\mathcal F(\Omega):=\int_{\partial D}\phi\frac{\partial u_\Omega}{\partial \nu}\,d\HH^{d-1}\,\ee
where $\nu$ is the outer unit normal to $\partial D$. In \cite{agcasp}, Aguilera, Caffarelli and Spruck studied the following shape optimization problem
\begin{equation}\label{e:heat-conduction-problem}
\min\Big\{\mathcal F(\Omega)\ :\ \Omega\subset D;\ \Omega\,-\,\text{open};\ |\Omega|=m;\ D\setminus\Omega\,-\,\text{compact subset of\ } D\Big\}\,,
\end{equation}
where $m$ is fixed and such that $0<m<|D|$. They showed that there exists a solution $\Omega$ to \eqref{e:heat-conduction-problem}
%{\color{red}  Dario: MA SIAMO SICURI DI QUESTO EVERY? A ME PARE CHE NELLA SEZIONE 6 DI ACS (IDEM AAC) DICANO SOLO CHE OGNI SOLUZIONE DEL PROBLEMA PERTURBATO CON $f_\eps$ E' SOLUZIONE DEL PROBLEMA CON IL VINCOLO DI MISURA. MA QUESTO NON ASSICURA CHE RIESCONO A FARE REGOLARITA' PER TUTTE LE SOLUZIONI DEL PROBLEMA CON VINCOLO DI MISURA. O SBAGLIO?}
%the following holds.
%\begin{itemize}
%	\item The state function $u_\Omega$ is Lipschitz continuous in $D$ and smooth in a neighborhood of the fixed boundary $\partial D$.
%	\item $u_\Omega$ is non-degenerate in the sense that there is a constant $\xi>0$, for which
%	$$\frac{1}{r^{d-1}}\int_{\partial B_r(x)}u_\Omega\,d\HH^{d-1}\ge \xi\,r\quad\text{whenever}\quad x\in\overline \Omega\quad\text{and}\quad B_r(x)\subset D\,.$$
%	\item There is $\eps>0$ such that, for every $x$ on the boundary of the set $K:=D\setminus\Omega$, we have:
%	$$\eps_0|B_r|\le |\Omega\cap B_r(x)|\le (1-\eps_0)|B_r|\quad\text{whenever}\quad B_r(x)\subset D\,.$$
%	\item There is a constant $\tilde M>0$ such that, for every $B_{2r}(x_0)\subset D$, we have the bound
%	\begin{equation*}
%	0\le \bigg|\int_{B_{2r}(x_0)}\nabla u_\Omega\cdot\nabla \varphi\,dx\bigg|\le \tilde{M} \|\varphi\|_{L^\infty(B_{2r}(x_0))}\quad\text{for every}\quad \varphi\in C^{0,1}_c(B_{2r}(x_0)),
%	\end{equation*}
%	where $C^{0,1}_c(B_{2r}(x_0))$ is the space of Lipschitz functions with compact support in $B_{2r}(x_0)$.
%	%\item $\Omega$ is an NTA domain, so the Boundary Harnack principle holds on $\Omega$.
%\end{itemize}	
whose reduced boundary $\partial^\ast\Omega\cap D$ is $C^\infty$ smooth and that there exists $\Lambda>0$ for which
\begin{equation}\label{e:heat-conduction-optimality-condition}
|\nabla u_\Omega||\nabla v_\Omega|=\Lambda\quad\text{on}\quad\partial^\ast\Omega\cap D\,,
\end{equation}
where $v_\Omega$ is the solution to the PDE
\begin{equation}\label{e:heat-conduction-v-Omega}
\Delta v_\Omega=0\quad\text{in}\quad\Omega\,,\qquad v_\Omega=\phi\quad\text{on}\quad \partial D\,,\qquad v_\Omega\equiv 0\quad\text{on}\quad K=D\setminus\Omega\,.
\end{equation}
Precisely, the regularity theorem of Aguilera, Caffarelli and Spruck is the following:
\begin{theo}[Aguilera-Caffarelli-Spruck \cite{agcasp}]\label{t:ACS}
Let $D\subset \R^d$ be a smooth open and bounded set.	There is a solution $\Omega$ to \eqref{e:heat-conduction-problem} whose free boundary $\partial\Omega\cap D$ can be decomposed as the disjoint union of a regular part $\text{\rm Reg}(\partial\Omega)$ and a singular part
	$$\text{\rm Sing}(\partial\Omega):=(\partial\Omega\cap D)\setminus \text{\rm Reg}(\partial\Omega).$$
	The regular set $\text{\rm Reg}(\partial\Omega)$  is a relatively open subset of $\partial\Omega\cap D$ and (locally) a $C^\infty$ manifold.\\ The remaining singular part $\text{\rm Sing}(\partial\Omega)$ is a closed set such that
	$$\HH^{d-1}\Big(\text{\rm Sing}(\partial\Omega)\Big)=0.$$
\end{theo}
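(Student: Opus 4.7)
The plan is to follow the classical free boundary program adapted to this two-state setting: existence via relaxation, an optimality condition from inner variations that preserve the measure constraint, and regularity of the free boundary from Lipschitz bounds, non-degeneracy, a blow-up analysis, and an $\varepsilon$-regularity argument. For existence, the admissible class $\{\Omega\subset D : |\Omega|=m,\ D\sm\Omega \text{ compact}\}$ is not closed in a useful topology, so I would relax to quasi-open sets (or to sets of finite perimeter for the complement $K=D\sm\Omega$) and use the lower semicontinuity of $\Omega\mapsto \mathcal{F}(\Omega)$, rewritten as a coupled Dirichlet-type energy via $u_\Omega$ and a suitable harmonic extension of $v_\Omega$, together with $L^1$-compactness of the indicator functions $\ind_\Omega$ to preserve the measure constraint in the limit.

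To obtain the optimality condition, I would flow the set by $\Phi_t=\mathrm{Id}+t\xi$ with $\xi\in C^\infty_c(D;\R^d)$ satisfying $\dive\xi=0$, so that $|\Phi_t(\Omega)|=|\Omega|+O(t^2)$, compute the first variation of $\mathcal{F}(\Phi_t(\Omega))$ through the adjoint state $v_\Omega$, and read the resulting boundary integral identity as the pointwise condition \eqref{e:heat-conduction-optimality-condition} on the regular part of $\partial\Omega$; dropping the divergence-free restriction and invoking a Lagrange multiplier argument produces $\Lambda>0$. For the regularity statement, I would then prove the usual package of estimates: Lipschitz regularity of $u_\Omega$ (and $v_\Omega$) up to $\partial\Omega$, non-degeneracy of the product $u_\Omega v_\Omega$ near $\partial\Omega$, and uniform two-sided density of $\Omega$ and $D\sm\Omega$. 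Blow-ups at any free boundary point then yield a pair $(u_0,v_0)$ of non-negative harmonic functions on a common cone $\Omega_0$, vanishing on $\partial\Omega_0$ and satisfying $|\nabla u_0|\,|\nabla v_0|=\Lambda$ along the smooth part of $\partial\Omega_0$, and an $\varepsilon$-regularity argument in the spirit of Alt--Caffarelli adapted to this product condition shows that the points admitting a half-plane blow-up form a relatively open $C^{1,\alpha}$ subset $\text{\rm Reg}(\partial\Omega)$, with $C^\infty$ smoothness following by elliptic bootstrap.

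For the Hausdorff estimate $\HH^{d-1}(\text{\rm Sing}(\partial\Omega))=0$, I would introduce a Weiss-type monotonicity formula adapted to the pair $(u_\Omega,v_\Omega)$, derive a density gap between half-plane blow-ups and any other $1$-homogeneous blow-up, and then cover the singular set by a standard Federer-type reduction to reach the zero-measure conclusion. The main obstacle is that $\mathcal{F}$ is not the minimization of a single integral functional, so the comparison-based arguments (e.g.\ replacement by $\min(u,v)$-type competitors) that underpin the classical Alt--Caffarelli theory are not directly available; the entire analysis must proceed through the coupled first-order variation identities for $u_\Omega$ and $v_\Omega$, and the measure constraint further restricts the admissible test fields to those with $\dive\xi=0$. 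This twofold restriction is precisely what, beyond the $\HH^{d-1}$-level statement stated here, forces the use of the stability-based framework of \cite{BMMTV} in order to push the singular set estimate to an actual dimension bound.
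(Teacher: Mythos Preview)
This theorem is not proved in the present paper: it is stated with explicit attribution to Aguilera--Caffarelli--Spruck \cite{agcasp} and is used as a starting point for the paper's own contributions (\cref{t:heat-conduction-singular} and \cref{t:AC}). The paper only records, in \cref{s:prel}, the qualitative outputs $(\mathcal H1)$--$(\mathcal H5)$ of \cite{agcasp} that it needs downstream. So there is no ``paper's own proof'' of \cref{t:ACS} to compare your proposal against.

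That said, your outline is a fair sketch of the classical program carried out in \cite{agcasp}, and it correctly identifies the structural difficulty (no single energy functional, hence no direct comparison competitors). Two small points where your sketch drifts from what is actually done. First, volume preservation in \cite{agcasp} (and in this paper, \cref{l1}) is not achieved by restricting to divergence-free fields; instead one couples an arbitrary field $\eta$ with an auxiliary field $\xi$ supported away from $\eta$ and adjusts the $\xi$-component via the implicit function theorem so that $t\mapsto|\Phi_t(\Omega)|$ is exactly constant. This is what produces the Lagrange multiplier $\Lambda$ and is more flexible than the $\dive\xi=0$ ansatz. Second, the $\HH^{d-1}$-null estimate on the singular set in \cite{agcasp} does not go through a Weiss monotonicity formula or Federer reduction; it is obtained at the level of measure from the density estimates and the finite-perimeter structure (the reduced boundary already has full $\HH^{d-1}$ measure, and non-reduced boundary points are ruled out by the uniform two-sided density). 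The Weiss/Federer machinery enters only later in this paper, when pushing from $\HH^{d-1}(\text{Sing})=0$ to an actual Hausdorff-dimension bound.
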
	

\begin{remark}
We notice that, we can rewrite the functional \eqref{e:F2} in terms of the state variables $u_\O$ and $v_\O$, as follows:
\be\label{e:F2.2}
\mathcal F(\Omega)=\int_{D}\nabla u_\O \cdot\nabla v_\O\,dx.
\ee
\end{remark}

\subsection{One-phase Bernoulli problem with measure constraint}\label{sub:intro-agalca}

Let $D$ be a smooth bounded open set in $\R^d$ and let $\varphi: D\to\R$ be a non-negative function in $H^1(D)$. In \cite{agalca}, Aguilera, Alt and Caffarelli studied the following variational problem
\begin{equation}\label{e:one-phase-constrained}
\min\Big\{\int_D |\nabla u|^2\,dx\\ :\ u\in H^1(D),\  u-\varphi\in H^1_0(D),\ |\{u>0\}\cap D|=m\Big\}\,,
\end{equation}
for any fixed $m>0$ with $0<m<|D|$, and they proved the following theorem.
\begin{theo}[Aguilera-Alt-Caffarelli \cite{agalca}]\label{t:AAC}
Let $D\subset \R^d$ be a smooth open and bounded set. There is a solution $u$ to \eqref{e:one-phase-constrained}. For every solution $u$ with positivity set $\Omega_u:=\{u>0\}$ its free boundary $\partial\Omega_u\cap D$ can be decomposed as the disjoint union of a regular part $\text{\rm Reg}(\partial\Omega_u)$ and a singular part $\text{\rm Sing}(\partial\Omega_u)$.
	The regular part $\text{\rm Reg}(\partial\Omega_u)$  is a relatively open subset of $\partial\Omega_u\cap D$ and (locally) a $C^\infty$ manifold, while the singular is such that:
	$$\HH^{d-1}\Big(\text{\rm Sing}(\partial\Omega_u)\Big)=0.$$
\end{theo}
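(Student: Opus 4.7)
The plan is to reduce Theorem~\ref{t:AAC} to the unconstrained one-phase Alt-Caffarelli theory via a Lagrange multiplier, and then invoke the classical regularity of minimizers of $J_\Lambda(v,D):=\int_D|\nabla v|^2\,dx+\Lambda|\{v>0\}\cap D|$.

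Existence of a minimizer is obtained by the direct method. From a minimizing sequence $u_n$ I extract a subsequence converging weakly in $H^1(D)$ and strongly in $L^2(D)$ to some $u^\ast$ with $u^\ast-\varphi\in H^1_0(D)$. Lower semicontinuity of the Dirichlet energy and strong $L^2$ convergence give $|\{u^\ast>0\}\cap D|\le m$. To recover the equality, one either relaxes the constraint to the inequality $|\{u>0\}\cap D|\le m$, proves existence for the relaxed problem, and then checks that the minimizer saturates (since enlarging the support can only lower the Dirichlet energy), or directly produces a competitor by inserting a small bump inside the zero set.

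The heart of the argument is the Lagrange multiplier step: one shows the existence of a constant $\Lambda>0$ such that every minimizer $u$ of \eqref{e:one-phase-constrained} is also a minimizer of $J_\Lambda(\cdot,D)$ among functions with boundary datum $\varphi$. The idea is classical: given a compactly supported variation of $u$, one rescales it through a volume-correcting perturbation that restores the constraint $|\{v>0\}\cap D|=m$. The Dirichlet energy changes linearly in the perturbation parameter, while the volume changes linearly as well, and the first-order optimality condition selects a single $\Lambda$. The subtle points are that \emph{the same} $\Lambda$ must control both inward and outward variations and that $\Lambda$ must be strictly positive; both follow from a careful use of the minimality of $u$ together with the non-triviality of the boundary datum. \emph{This is where I expect the main difficulty to lie.}

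Once the $J_\Lambda$-minimality is established, the classical Alt-Caffarelli theory applies: $u$ is Lipschitz and non-degenerate in $D$, $\partial\Omega_u\cap D$ has locally finite $\mathcal{H}^{d-1}$ measure, and the reduced free boundary $\partial^\ast\Omega_u\cap D$ is a $C^{1,\alpha}$ manifold on which the free boundary condition $|\nabla u|=\sqrt{\Lambda}$ holds. Bootstrapping via the hodograph transform and elliptic Schauder estimates upgrades the regularity to $C^\infty$, which is how $\text{\rm Reg}(\partial\Omega_u)$ is defined. For the singular set, the density estimates of Alt-Caffarelli force $\mathcal{H}^{d-1}$-almost every point of $\partial\Omega_u\cap D$ to belong to the reduced boundary, hence to the regular set; equivalently, Weiss' monotonicity formula plus Federer's dimension reduction show that blow-ups are $1$-homogeneous minimizers of $J_\Lambda$ and that outside a closed set of vanishing $\mathcal{H}^{d-1}$ measure the blow-ups are half-planes. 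This yields $\mathcal{H}^{d-1}\big(\text{\rm Sing}(\partial\Omega_u)\big)=0$.
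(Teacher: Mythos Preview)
The paper does not prove this theorem; it is quoted from the literature. The remark immediately following it notes that \cite{agalca} establishes only the existence of \emph{one} regular minimizer, while the regularity of \emph{every} minimizer is obtained in \cite[Chapter~10]{V} via the approach of \cite{brianconlamboley} and \cite{briancon}.

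Your outline has the right shape, and you correctly flag the Lagrange-multiplier step as the crux. But the mechanism you describe for that step does not deliver what you claim. Computing a first-order optimality condition by volume-correcting a compactly supported perturbation yields only $\delta J_\Lambda(u)[\eta]=0$ along inner variations, i.e.\ criticality; it does not show that $u$ is a \emph{minimizer} of $J_\Lambda$ among all competitors with boundary datum $\varphi$. The Alt--Caffarelli machinery you then want to invoke (Lipschitz bounds, non-degeneracy, density estimates, flat-implies-$C^{1,\alpha}$) is a comparison theory and is not available for mere critical points. The two standard routes close this gap differently. In \cite{agalca} one varies the penalization parameter and shows that for some $\Lambda$ an unconstrained $J_\Lambda$-minimizer has measure exactly $m$; this produces a specific regular constrained minimizer. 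The approach of \cite{briancon,brianconlamboley}, which is the one the paper points to for the ``every minimizer'' statement, shows instead that any constrained minimizer is a local almost-minimizer (with two-sided bounds $0<\Lambda_-\le\Lambda_+<\infty$), which already suffices for the full Alt--Caffarelli regularity program without ever establishing global $J_\Lambda$-minimality. Your sketch does not carry out either argument, so the reduction to the unconstrained problem remains a gap.
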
	

\begin{remark}
In \cite{agalca} it was shown that there exists a minimizer which is regular; for the regularity of every minimizer, as stated in the above theorem, we refer to \cite[Chapter 10]{V}, where it is used an argument inspired by~\cite{brianconlamboley} and~\cite{briancon}.
\end{remark}

\begin{remark}\label{oss:agcasp-implies-agalca}
We notice that it is possible to write \eqref{e:one-phase-constrained} as a shape optimization problem in terms of the shape functional
\be\label{e:F1}
\mathcal F(\Omega)=\int_{D}|\nabla u_\O|^2\,dx,
\ee
where for every $\O\subset D$, the state function $u_\O\in H^1(D)$ is the solution to the PDE
\begin{equation}\label{e:state-function-aac-intro}
\Delta u_\Omega=0\quad\text{in}\quad\Omega\,,\qquad u_\Omega-\varphi\in H^1_0(\Omega).
\end{equation}
Precisely, if $u$ minimizes \eqref{e:one-phase-constrained}, then the set $\Omega:=\{u>0\}$ is open and
$$\mathcal F(\Omega)\le \mathcal F(\widetilde \Omega),\qquad\mbox{for every open set }\widetilde\Omega\subset D\quad\mbox{with}\quad |\widetilde\Omega|=m.$$
Conversely, if $\Omega$ is an open subset of $D$, minimizing $\mathcal F$ among all open subsets of fixed measure $m$, then the solution $u_\Omega$ to \eqref{e:state-function-aac-intro} is a solution to \eqref{e:one-phase-constrained}. Thus, problem \eqref{e:one-phase-constrained} can be seen as a particular case of \eqref{e:heat-conduction-problem} in which $\psi=\varphi$ and $u_\Omega=v_\Omega$.
\end{remark}

\begin{remark}\label{rem.saturated}
We also notice that both problems \eqref{e:heat-conduction-problem} and \eqref{e:one-phase-constrained} are respectively equivalent to the auxiliary problems
  $$ \min\Big\{\mathcal F(\Omega)\ :\ \Omega\subset D;\ \Omega\,-\,\text{open};\ |\Omega|\leq m\Big\}.$$
Indeed, it is not hard to show that there is a minimizer to this auxiliary problem and that every minimizer saturates the measure constraint; we refer to \cite[Proposition 11.1]{V} for \eqref{e:one-phase-constrained} and \cite[Theorem 1.5]{BMV1} for problem \eqref{e:heat-conduction-problem}.
\end{remark}

\subsection{Main results} In this paper we enhance the results of Aguilera-Caffarelli-Spruck \cite{agcasp} and Aguilera-Alt-Caffarelli \cite{agalca} by improving the estimates on the dimension of the singular sets on the free boundaries of the solutions to \eqref{e:one-phase-constrained} and \eqref{e:heat-conduction-problem}. In particular, we show that in the physically relevant dimension $d=3$ (and also in $d=2$ and $d=4$) the free boundaries are $C^\infty$ smooth. Our main results are the following.

%discuss the possible occurrence of singularity in \eqref{e:one-phase-constrained} and \eqref{e:heat-conduction-problem} and we set up a program for the study of local properties of minimizers by introducing a notion of volume-preserving internal variation. The followings are the main result obtained for the singular set $\text{\rm Sing}(\partial\Omega\cap D)$ for solutions to \eqref{e:one-phase-constrained} and \eqref{e:heat-conduction-problem}. In particular, they imply that in the physically relevant dimension $d=3$ (and also in $d=2$ and $d=4$) the free boundary $\partial\Omega\cap D$ is $C^\infty$ smooth.
\begin{theo}[Dimension of the singular set for solutions to \eqref{e:heat-conduction-problem}]\label{t:heat-conduction-singular}
	Let $\Omega$ be as in the statement of \cref{t:ACS}, then the following  holds:	
	\begin{enumerate}[\rm (i)]
		\item if $d< d^\ast$, then $\text{\rm Sing}(\partial\Omega)=\emptyset$;
		\item if $d\ge d^\ast$, then the Hausdorff dimension of $\text{\rm Sing}(\partial\Omega)$ is at most $d-d^\ast$;
	\end{enumerate}		
	where $d^\ast$ is the critical dimension from \cref{sub:intro-critical-dimension}.
\end{theo}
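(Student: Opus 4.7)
My plan is to follow the dimension reduction program developed in \cite{BMMTV}, adapting it to the measure-constrained two-state-function setting of \eqref{e:heat-conduction-problem}. The starting point is the derivation of a stability inequality for the optimal $\Omega$. For any smooth vector field $\xi \in C^\infty_c(D;\R^d)$ generating perturbations $\Phi_t = \text{Id} + t\xi$ that preserve the measure (i.e.\ $|\Phi_t(\Omega)| = |\Omega|$ for all small $t$), the second-order optimality $\left.\frac{d^2}{dt^2}\right|_{t=0}\!\mathcal F(\Phi_t(\Omega)) \ge 0$ should translate, after standard shape derivative computations involving both $u_\Omega$ and $v_\Omega$, into a boundary quadratic form in the normal trace $\xi\cdot\nu$. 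Invoking the Euler--Lagrange condition \eqref{e:heat-conduction-optimality-condition}, the integrand along the regular part $\text{\rm Reg}(\partial\Omega)$ can be expressed in terms of $\xi\cdot\nu$ and the Lagrange multiplier $\Lambda$ alone, decoupling from the specific profiles of $u_\Omega$ and $v_\Omega$; this is what aligns the problem with the one-phase analysis of \cite{BMMTV}.

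The main step is to show that every blow-up limit $u_0$ of $u_\Omega$ at a free boundary point $x_0$ is a $1$-homogeneous global stable solution of the one-phase problem in the sense of \cite{BMMTV}. Homogeneity should follow from a Weiss-type monotonicity formula adapted to the measure constraint, possibly via the auxiliary penalized formulation in \cref{rem.saturated}. For stability, the key point is to formulate a new volume-preserving stability inequality, expressed purely in terms of the scalar boundary datum $\xi\cdot\nu$, which is preserved under rescaling. At the blow-up level the constraint $\int \dive\xi\, dx = 0$ can then be removed: given any $\xi\in C^\infty_c(\R^d;\R^d)$, a small corrector supported on a far regular portion of $\partial \Omega_{u_0}$ enforces volume preservation, and by scaling its contribution to the quadratic form vanishes in the limit, yielding the unconstrained Caffarelli--Jerison--Kenig stability on $\Omega_{u_0}$.

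Once these steps are in place, the theorem follows from the dimension reduction principle of \cite{BMMTV} essentially verbatim. At any singular boundary point, the blow-up $u_0$ is a $1$-homogeneous global stable solution which is not of the half-plane form \eqref{e:half-plane}, so by the definition of the critical dimension recalled in \cref{sub:intro-critical-dimension} we must have $d\ge d^\ast$. Hence $\text{\rm Sing}(\partial\Omega) = \emptyset$ whenever $d<d^\ast$, and for $d\ge d^\ast$ Federer's reduction scheme from \cite{BMMTV} yields $\dim_\HH\!\big(\text{\rm Sing}(\partial\Omega)\big) \le d-d^\ast$.

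The chief obstacle is the second step: identifying the correct volume-preserving stability inequality---one that is a genuine consequence of the measure-constrained second variation and that survives the blow-up limit as the full one-phase stability of \cite{BMMTV}. The additional complication of the two state functions $u_\Omega$ and $v_\Omega$ is expected to be handled transparently thanks to the decoupling afforded by \eqref{e:heat-conduction-optimality-condition}, so that the new analytical input concerns solely the passage from a constrained to an unconstrained stability inequality in the blow-up.
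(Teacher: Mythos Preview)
Your plan is essentially the paper's own strategy: derive a volume-preserving second variation inequality (the paper's \cref{t:var}), pass to blow-ups, and remove the mean-zero constraint by placing a small corrector on a far regular portion whose contribution vanishes under scaling (the paper's \cref{t:AC}), then conclude via Federer.

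Two points where the paper's execution differs from your sketch and which you should be aware of. First, the decoupling of the two state functions is not achieved at the macroscopic level through the boundary condition \eqref{e:heat-conduction-optimality-condition}; the second variation of $\mathcal F$ stays in its volume-integral form \eqref{e:deriv-F} all the way through, and the reduction to a scalar problem happens only at the blow-up via the Boundary Harnack principle ($(\mathcal H5)$), which forces $v_0=C u_0$. Trying to rewrite the second variation as a boundary quadratic form in $\xi\cdot\nu$ before blow-up is delicate because the singular set is not yet controlled. Second, the paper isolates the constraint-removal step as a standalone result about $1$-homogeneous ``stable along mean-zero variations'' solutions (\cref{def:global-stable-solutions-meanzero} and \cref{t:AC}); the homogeneity and isolated-singularity reduction are used explicitly to make the scaling argument for the corrector go through. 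A small step you did not mention but which the paper handles: one must check that the Lagrange multiplier is strictly positive, $\Lambda>0$, since otherwise the blow-up equation carries no free-boundary information (Step~3 of \cref{l:blow}).
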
	
Similarly, the vectorial problem \eqref{e:heat-conduction-problem} immediately implies the following result for its scalar counterpart, that is the one-phase problem with measure constraint \eqref{e:one-phase-constrained}.
\begin{theo}[Dimension of the singular set for solutions to \eqref{e:one-phase-constrained}]\label{t:one-phase-singular}
	Let $u$ be as in the statement of \cref{t:AAC}. Then the following holds:	
	\begin{enumerate}[\rm (i)]
		\item if $d< d^\ast$, then $\text{\rm Sing}(\partial\Omega_u)=\emptyset$;
		\item if $d\ge d^\ast$, then the Hausdorff dimension of $\text{\rm Sing}(\partial\Omega_u)$ is at most $d-d^\ast$;
	\end{enumerate}		
	where $d^\ast$ is the critical dimension from \cref{sub:intro-critical-dimension}.
\end{theo}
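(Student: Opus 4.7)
The proof of \cref{t:one-phase-singular} is obtained by directly reducing it to \cref{t:heat-conduction-singular} via the identification described in \cref{oss:agcasp-implies-agalca}. The key observation is that the one-phase problem \eqref{e:one-phase-constrained} is the special case of the heat-conduction problem \eqref{e:heat-conduction-problem} corresponding to the choice $\phi=\varphi$, for which the two state functions coincide, $u_\Omega\equiv v_\Omega$.

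More precisely, given any solution $u$ of \eqref{e:one-phase-constrained}, the set $\Omega_u:=\{u>0\}$ is open and, by \cref{oss:agcasp-implies-agalca}, minimizes the shape functional $\mathcal F(\Omega)=\int_D|\nabla u_\Omega|^2\,dx$ from \eqref{e:F1} among open subsets of $D$ of measure $m$. For the choice $\phi=\varphi$ in \eqref{e:F2}, an integration by parts (using that $u_\Omega=\varphi$ on $\partial D$, $u_\Omega\equiv 0$ on $D\setminus\Omega$, and $\Delta u_\Omega=0$ in $\Omega$) gives
\[
\int_{\partial D}\varphi\,\frac{\partial u_\Omega}{\partial \nu}\,d\HH^{d-1}=\int_D\nabla u_\Omega\cdot\nabla u_\Omega\,dx,
\]
so $\Omega_u$ is also a minimizer of the heat-conduction problem \eqref{e:heat-conduction-problem} with $\phi=\varphi$, with associated state functions $u_{\Omega_u}=v_{\Omega_u}=u$. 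Under this identification, the optimality condition \eqref{e:heat-conduction-optimality-condition} reduces to $|\nabla u|^2=\Lambda$ on $\partial^\ast\Omega_u\cap D$, which is the usual one-phase Bernoulli condition.

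At this point it remains only to invoke \cref{t:heat-conduction-singular} applied to $\Omega_u$: the regular/singular decomposition of $\partial\Omega_u\cap D$ provided by \cref{t:AAC} coincides with the one produced by \cref{t:ACS} in the case $\phi=\varphi$, because in both cases ``regular'' is defined as the relatively open subset where the free boundary is locally a $C^\infty$ manifold, an intrinsic notion. Consequently, the dimension bounds of \cref{t:heat-conduction-singular} transfer verbatim and yield $\text{\rm Sing}(\partial\Omega_u)=\emptyset$ for $d<d^\ast$, and $\dim_{\HH}\text{\rm Sing}(\partial\Omega_u)\le d-d^\ast$ for $d\ge d^\ast$.

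The only non-cosmetic point, and hence the ``main obstacle'' in this otherwise direct deduction, is the need to know that the conclusion of \cref{t:heat-conduction-singular} applies to every minimizer $\Omega_u$ arising from \eqref{e:one-phase-constrained}, and not merely to a distinguished regular solution. This is guaranteed by \cref{t:AAC}, whose proof (see \cite[Chapter 10]{V}) establishes the regular/singular structure for every minimizer of \eqref{e:one-phase-constrained}; combined with the dimension reduction developed in \cite{BMMTV} through the notion of global stable solution, this is exactly what the proof of \cref{t:heat-conduction-singular} needs as input. The statement of \cref{t:one-phase-singular} thus follows.
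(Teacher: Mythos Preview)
Your proposal is correct and follows exactly the paper's own route: the paper proves \cref{t:one-phase-singular} in one line by invoking \cref{oss:agcasp-implies-agalca} to view \eqref{e:one-phase-constrained} as the special case $\phi=\varphi$ of \eqref{e:heat-conduction-problem} and then applying \cref{t:heat-conduction-singular}. You spell out this reduction in more detail and, in particular, you correctly flag and resolve the one genuine subtlety---that \cref{t:heat-conduction-singular} is stated for the particular solution produced by \cref{t:ACS}, whereas \cref{t:one-phase-singular} concerns \emph{every} minimizer---by appealing to \cref{t:AAC} (and \cite[Chapter 10]{V}) to ensure that every minimizer of \eqref{e:one-phase-constrained} has the structural properties needed to run the argument of \cref{t:heat-conduction-singular}.
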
	

%\subsection{Dimension of the singular set and volume-preserving variations}\label{sub:intro-plan}

As explained in \cref{oss:agcasp-implies-agalca}, \cref{t:one-phase-singular} follows from  \cref{t:heat-conduction-singular}. Thus, we will focus on the heat conduction problem of Aguilera-Caffarelli-Spruck. From now on, given $D$ a smooth bounded open set in $\R^d$, we set $\O$ to be a minimizer of
\begin{equation*}
\min\Big\{\mathcal F(\Omega)\ :\ \Omega\subset D;\ \Omega\,-\,\text{open};\ |\Omega|=m\Big\}\,,
\end{equation*}
where $0<m<|D|$ and where $\mathcal{F}$ is given by \eqref{e:F2.2}. \medskip

%Throughout the paper we address the case \eqref{e:heat-conduction-problem} (i.e. $\mathcal F$ of the form \eqref{e:F2.2}), since if $\varphi = \phi$ on $\partial D$, the two associated state functions $u_\O, v_\O$ associated do coincide and the problem reduce to the one-phase Bernoulli problem with measure constraint \eqref{e:one-phase-constrained}.

The main steps in the proof of \cref{t:heat-conduction-singular} are the following.\medskip

First, given any smooth vector field with compact support $\eta \in C^\infty_c(\R^d;\R^d)$, we construct a volume preserving flow, whose infinitesimal generator coincides with $\eta$ in a neighborhood of $\{\eta\neq 0\}$. Then, we show that if $\O$ is an optimal set for \eqref{e:heat-conduction-problem}, then there exists a Lagrange multiplier $\Lambda \geq 0$ such that $\Omega$  is stable for the functional
    $$
\mathcal{F}_\Lambda(\O) = \mathcal{F}(\O)  + \Lambda |\O|,
%\qquad\mbox{where}\quad\Omega\subset D,\ \Omega\,-\,\text{open};
$$
with respect to volume-preserving domain variations (see \cref{t:var}).
Moreover, by exploiting the PDEs satisfied by the state functions associated to the optimal shape, we prove that $\Lambda >0$. For this first part of the proof, we follow a general well-known program for measure-constrained optimization problems (see, for instance, \cite[Section 11]{V} and \cite[Section 17.5]{Maggi}).\medskip

In the second part of the proof, we show that the estimates on the dimension of the singular part hold for solutions which are stable with respect to volume-preserving variations. The key result is the following.
\begin{theo}\label{t:AC}
Suppose that $u:\R^d\to\R$ is a $1$-homogeneous global stable solution of the one-phase problem along mean-zero variations (in the sense of \cref{def:global-stable-solutions-meanzero}). Then, if $\O_u = \{u>0\}$, the following holds:
\begin{enumerate}[\rm (i)]
\item if $d< d^\ast$, then $u$ is a half-plane solution\,\rm;\it
\item if $d=d^\ast$, then $\text{\rm Sing}(\partial\O_u)=\{0\}$ or $u$ is a half-plane solution\,\rm;\it
\item if $d>d^\ast$, then the Hausdorff dimension of $\text{\rm Sing}(\partial\O_u)$ is at most $d-d^\ast$, that is,
$$\HH^{d-d^\ast+s}\big(\text{\rm Sing}(\partial\O_u)\big)=0\quad\text{for every}\quad s>0\,\rm;\it$$
\end{enumerate}		
where $d^\ast$ is the critical dimension from \cref{def:global-stable-solution-critical-dimension}, satisfying $5\leq d^\ast\leq 7$.
\end{theo}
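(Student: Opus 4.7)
The plan is to follow the dimension reduction scheme developed in \cite{BMMTV} for global stable solutions, while carefully accommodating the fact that the admissible test variations here satisfy the linear mean-zero constraint $\int_{\partial\Omega_u}\xi\cdot\nu\,d\HH^{d-1}=0$. The overall structure will be: (a) recover the Caffarelli--Jerison--Kenig stability inequality on $\Omega_u$ from the weaker mean-zero stability; (b) verify that the class of $1$-homogeneous global stable solutions along mean-zero variations is closed under blow-up; (c) combine with the Jerison--Savin classification and Federer's dimension reduction.

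First, I would deduce the classical unconstrained CJK stability inequality on $\Omega_u$ from the mean-zero stability. Given an arbitrary $\varphi\in C^\infty_c(\R^d\setminus\{0\})$, decompose its boundary trace as $\varphi=\widetilde\varphi+c\,\chi$, where $\chi$ is a fixed smooth cutoff satisfying $\int_{\partial\Omega_u}\chi\,d\HH^{d-1}=1$ and $c=\int_{\partial\Omega_u}\varphi\,d\HH^{d-1}$, so $\widetilde\varphi$ is mean-zero. Exploiting the $1$-homogeneity of $u$, translate the cutoff $\chi$ along the radial direction so that $\spt\chi$ is pushed to infinity and its contribution in the second variation becomes negligible; the resulting limit gives the full CJK inequality on the regular part of $\partial\Omega_u$ for every test function, without any mean constraint. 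This is analogous to the reduction in \cite[Section 4]{BMMTV}, with an extra step to handle the global linear constraint.

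Second, I would show that mean-zero global stability passes to blow-up limits. By the Weiss monotonicity formula (which is a consequence of stationarity, not stability), all blow-ups of $u$ are $1$-homogeneous. Compactness for the energies and uniform Lipschitz estimates on minimizers/stationary points of the one-phase functional (available in this context because $u$ is already a global stable solution) give convergence of the blow-up sequence in a strong enough topology to pass the second variation inequality to the limit. The mean-zero constraint on a test field $\eta$ for the blow-up is preserved by pulling $\eta$ back via the rescaling, because $\HH^{d-1}$ rescales homogeneously and the integral condition is linear; any small compensating correction needed to restore the exact mean-zero condition at each scale can be chosen with vanishing effect in the quadratic form as the scale tends to zero.

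Third, items (i)--(iii) follow from the standard dimension reduction machinery once the two above ingredients are in place. In dimension $d<d^\ast$, the Jerison--Savin Simons-type argument, applied to the CJK inequality obtained in Step~1, forces every $1$-homogeneous global stable solution to be a half-plane solution, proving (i). In dimension $d=d^\ast$, if $\text{Sing}(\partial\Omega_u)$ contained a point $x_0\neq 0$, then Step~2 would produce at $x_0$ a nontrivial $1$-homogeneous global stable solution in one fewer essential dimension, contradicting the definition of $d^\ast$; hence $\text{Sing}(\partial\Omega_u)\subseteq\{0\}$, giving (ii). Finally, (iii) is the Federer stratification argument: the class of $1$-homogeneous global stable solutions along mean-zero variations is closed under blow-up (Step~2) and trivializes to half-planes in dimension below $d^\ast$ (Step~1 plus Jerison--Savin), so the usual covering/stratification bound yields $\HH^{d-d^\ast+s}(\text{Sing}(\partial\Omega_u))=0$ for every $s>0$.

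\textbf{Main obstacle.} The technically delicate point is Step~2: the mean-zero condition is a non-local integral constraint, so a test vector field $\eta$ for the blow-up does not automatically pull back to a mean-zero field for $u$ at the original scale, and conversely a sequence of mean-zero fields on $u$ need not converge to a mean-zero field on the blow-up. I expect the correct formulation will require choosing a fixed reference cutoff that realizes a unit of mean and subtracting a small multiple of it at each scale, then showing that the corresponding correction term in the second variation vanishes in the limit. This is precisely where the new \emph{formulation} of stability mentioned in the abstract — designed so as to be preserved under blow-up — plays its decisive role.
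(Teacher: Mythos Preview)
Your Step (a) contains the right idea --- exploiting the $1$-homogeneity of $u$ to push a compensating field out to large scale so that its contribution to the second variation becomes negligible --- and this is precisely the mechanism the paper uses. The difference is where the trick is applied. You aim it at the Caffarelli--Jerison--Kenig inequality for scalar test functions, and then still need your Step (b) to close the class under blow-up. The paper instead applies the same scaling idea directly at the level of the vector-field second variation and obtains \emph{full} (unconstrained) stability in one stroke: for an arbitrary $\eta\in C^\infty_c(\R^d;\R^d)$ one manufactures a family $\xi_\eps\in C^\infty_c(\R^d;\R^d)$ with support disjoint from $\eta$, sitting on the regular part of $\partial\Omega_u$, for which $\delta^2\mathcal G_\Lambda(u)[\xi_\eps]\to 0$; since the supports are disjoint, the mean-zero inequality \eqref{e:stab.mean} decouples as
\[
0\le \delta^2\mathcal G_\Lambda(u)[\eta]+\Big(\int_{\Omega_u}\dive\eta\Big)^{2}\,\delta^2\mathcal G_\Lambda(u)[\xi_\eps],
\]
and letting $\eps\to 0$ yields $\delta^2\mathcal G_\Lambda(u)[\eta]\ge 0$ for every $\eta$. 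With full stability in hand, \cite[Theorems~7.8--7.9]{BMMTV} apply verbatim and items (i)--(iii) follow immediately.

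In other words, the ``main obstacle'' you identify --- propagating the non-local mean-zero constraint through the blow-up procedure --- is not overcome in the paper but \emph{bypassed}: the two-field formulation of \cref{def:global-stable-solutions-meanzero} is set up exactly so that the scaling argument upgrades mean-zero stability to full stability before any blow-up is taken, making your Step (b) unnecessary. Your route is not wrong, and in fact the scaling computation you sketch in Step (b) would also show that blow-ups at points $x_0\neq 0$ are fully (not merely mean-zero) stable, but it is a longer path to the same destination.
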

%Finally, we show that in dimension $d< d^*$ all the points on $\partial\O\cap D$ are regular.

\noindent Finally, using this result, together with the Federer's dimension reduction principle and the blow-up analysis carried out in \cite{BMMTV}, we conclude that in dimension $d< d^*$ all the points on the optimal free boundary $\partial\O\cap D$ are regular, while in higher dimension, we prove the Hausdorff estimates from \cref{t:heat-conduction-singular}.

%Indeed, by rephrasing the analysis in \cite{BMMTV}, it is possible to set up a stability condition with respect to domain variations for minimal shapes $\O\subset D$ with fixed volume, stable with respect to the ``triple blow-up procedure'' of \cite[Section 4]{BMMTV}.

\subsection{Plan of the paper}
In \cref{s:volume-contrained-variation} we show the existence of volume-preserving inner domain variations and we prove the validity of a stability-type condition for minimizers of $\mathcal{F}$ along a suitable class of volume-preserving flows. In \cref{s:AC} we recall the notion of a global stable solution to the one-phase Bernoulli  problem introduced in \cite{BMMTV} and we study the case of stability with respect to volume-preserving variations. This section can also be read independently and, together with \cref{s:volume-contrained-variation}, contains the key results for the analysis of the singular set for volume-constrained free boundary problems. In \cref{s:agcafs} we apply the previous analysis to the heat conduction problem \eqref{e:heat-conduction-problem}. More precisely, in light of the blow-up analysis developed in \cite{BMMTV}, we connect the occurrence of singularity at a macroscopic level to the possible existence of singular stable cones for the Alt-Caffarelli functional, in the sense of \cref{s:AC}. Ultimately, the estimates in \cref{t:heat-conduction-singular} follow by Federer's reduction principle.

\subsection{Notations}\label{s:notations}
Through the paper, we use the notation
$$B_r(x_0)=\{x\in \R^d : |x-x_0|<r\},$$
for the ball of radius $r$ centered at point $x_0$ and, when $x_0=0$, we simply write $B_r=B_r(0)$. For any set $E\subset \R^d$, we define
$$\mathds{1}_E(x):=\begin{cases}
1, &\text{if }\ x\in E,\\
0, &\text{if }\ x\in \R^d\setminus E.
\end{cases}$$
Given a non-negative function $u$ we often denote its positivity set as $\O_u:=\{u>0\}$.

We denote by $H^1(\R^d)$ the set of Sobolev functions in $\R^d$, that is, the closure of the smooth functions with bounded support in $\R^d$ with respect to the usual Sobolev norm
$$\|\phi\|_{H^1}^2=\int_{\R^d}\big(|\nabla\phi|^2+\phi^2\big)\,dx\,.$$
Similarly, for an open set $\O\subset\R^d$, we define the Sobolev space $H^1_0(\O)$ as the closure of $C^\infty_c(\O)$ with respect to $\|\cdot\|_{H^1}$. Therefore, every $u\in H^1_0(\O)$ is identically zero outside the set $\O$ and the inclusion $H^1_0(\O)\subset H^1(\R^d)$ holds true. When dealing with PDEs on cones we will use the space \[
\dot H^1(\R^d)=\{u\in L^{2^*}(\R^d) : \nabla u\in L^2(\R^d,\R^d)\},
\]
equipped with the usual norm $\|u\|_{\dot H^1}=\|\nabla u\|_{L^2}$, and $\dot H^1_0(\Omega)$ is the closure of $C^\infty_c(\Omega)$ in $\dot H^1(\R^d)$. If $\Omega$ satisfies uniform exterior density estimates, the following characterization holds: \[
\dot H^1_0(\Omega)=\{u\in \dot H^1(\R^d) : u=0\text{ a.e. in }\R^d\setminus \Omega\}.
\]
For any function $u:\R^d\to\R$ we will denote by $D u$ and $\nabla u$ the vectors row and column with components the partial derivatives of $u$, while $D^2u$ will be the Hessian matrix of $u$.

%%%%%%%%%%%%%%%%%%%%%%%%%%%%%%%%%%%%%%%%%%%%%%%%%%
\section{Volume-preserving variations}\label{s:volume-contrained-variation}
In this section we define a suitable collection of volume-preserving  variations of a domain $\Omega$.

Let $D\subset \R^d$ be a bounded set and $\eta\in C^\infty_c(D;\R^d)$ be a smooth compactly supported vector field in $D$. Let $\Phi_t:D\to D$, $t\in\R$ be the flow associated to $\eta$ and consider the sets $\Omega_t:=\Phi_t(\Omega)$. Then, by direct computations
\be\label{e:expression}\begin{aligned}
\delta |\O|[\eta] &:= \frac{\partial}{\partial t}\bigg|_{t=0}|\O_t|=\int_\O\dive\eta\,dx\\
\delta^2 |\O|[\eta] &:= \frac{\partial^2}{\partial t^2}\bigg|_{t=0}|\O_t|=
\int_\O (\dive\eta)^2+\eta\cdot\nabla(\dive\eta)\,dx.
\end{aligned}\ee
Similarly, in view of the analysis conducted in \cite[Section 2.1]{BMMTV}, it is possible to compute the second variation of the functional $\mathcal{F}$ along internal smooth variations with compact supports. Indeed, by recalling the definition of $\mathcal{F}$ in \eqref{e:F2.2}, we have
\begin{align}\label{e:deriv-F}
\begin{aligned}
\delta\mathcal{F}(\O)[\eta] &%=\frac{\partial}{\partial t}\bigg|_{t=0}\mathcal{F}({\Omega_t})
=\, \int_\O \nabla u_\Omega \cdot (\delta A)\nabla v_\Omega  \, dx\,,\\
\delta^2 \mathcal{F}(\O)[\eta] &%= \frac{\partial^2}{\partial t^2}\bigg|_{t=0}\mathcal{F}({\Omega_t})
=\,
2\int_\O\Big(\nabla u_\Omega\cdot(\delta^2 A)\nabla v_\Omega-\nabla (\delta u)\cdot \nabla (\delta v)\Big) dx\,,
\end{aligned}
\end{align}
where the matrices $\delta A\in L^\infty(D;\R^{d\times d})$ and $\delta^2A\in L^\infty(D;\R^{d\times d})$ are given by
%depend on the choice of the vector field $\eta \in C^\infty_c(\R^d;\R^d)$, that is:
	\begin{align}\label{e:first-and-second-variation-A}
	\begin{aligned}
	\delta A &:=-D\eta-\nabla\eta+(\dive\eta)\text{\rm Id}\,,\\
	\delta^2 A &:= (D\eta)\,(\nabla\eta)+\frac12\big(\nabla\eta\big)^2+\frac12(D\eta)^2-\frac12(\eta\cdot\nabla)\big[\nabla\eta+D\eta\big]\\
	&\qquad-\big(\nabla\eta+D\eta\big)\dive\eta+\text{\rm Id}\frac{(\dive\eta)^2+\eta\cdot\nabla(\dive\eta)}{2}\,,
	\end{aligned}
	\end{align}
and where $\delta u$ and $\delta v$ are the solutions to the PDEs
\be\label{e:deltau}
\begin{aligned}
-\Delta (\delta u)=\dive\big((\delta A)\nabla u_\Omega\big)\quad\text{in}\quad \Omega\ ,\qquad \delta u\in  H^1_0(\Omega)\,,\\
-\Delta (\delta v)=\dive\big((\delta A)\nabla v_\Omega\big)\quad\text{in}\quad \Omega\ ,\qquad \delta v\in H^1_0(\Omega)\,.
\end{aligned}
\ee
Clearly, in the case of the functional $\mathcal{F}$, in which the dependence on the shape $\O$ is formulated in terms the state variables $u_\O, v_\O$ (e.g. a single state variable $u_\O$ for \eqref{e:one-phase-constrained}), the previous derivatives must be calculated taking into account the effect of the variation of the domains on the new state variable supported on the perturbed domain. For more details on the derivation of \eqref{e:deriv-F} we refer to the analysis carried out in \cite[Section 2]{BMMTV}.\medskip

The following is the main result of this section.
\begin{theo}\label{t:var}
Let $D$ be a bounded open set in $\R^d$ and $\Omega \subseteq D$ be a minimizer to \eqref{e:heat-conduction-problem}. Then, there exists $\Lambda \geq 0$ such that, if
$$\mathcal{F}_\Lambda(\O) = \mathcal{F}(\O)  + \Lambda |\O|\,,$$
then
\begin{equation}
\delta \mathcal F_\Lambda(\O)[\eta]=0\qquad\text{and}\qquad \delta^2 \mathcal F_\Lambda(\O)\left[\eta - \frac{\int_\O \dive\eta\,dx}{\int_\O \dive\xi\,dx}\xi\right]\ge0\,,
\end{equation}
for every couple $\eta,\xi \in C^\infty_c(\R^d;\R^d)$ of smooth vector fields with disjoint compact supports satisfying
$$
\int_\O \dive\xi \,dx\neq 0.
$$
\end{theo}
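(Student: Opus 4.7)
The plan is to follow the classical Lagrange multiplier strategy for constrained minimization, adapted to shape functionals as in \cite[Section 11]{V} and \cite[Section 17.5]{Maggi}. First, from the given pair $(\eta,\xi)$ I would construct a one-parameter family of volume-preserving competitors. Since $\eta$ and $\xi$ have disjoint compact supports the flows $\Phi_t^\eta$ and $\Phi_s^\xi$ commute, so setting
\[
\O_{t,s} := \Phi_t^\eta\circ\Phi_s^\xi(\O),
\]
I obtain a smooth two-parameter family for which $(t,s)\mapsto|\O_{t,s}|$ is of class $C^\infty$ with $\partial_s|\O_{t,s}|\big|_{(0,0)}=\int_\O\dive\xi\,dx\neq 0$. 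The implicit function theorem then yields a smooth $s=s(t)$ with $s(0)=0$ along which $|\O_{t,s(t)}|\equiv m$, and therefore $g(t):=\mathcal F(\O_{t,s(t)})$ attains a minimum at $t=0$, giving $g'(0)=0$ and $g''(0)\ge 0$.

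Differentiating the volume constraint at $t=0$ gives $s'(0)=-c$ with $c:=\int_\O\dive\eta\,dx/\int_\O\dive\xi\,dx$, and then $g'(0)=\delta\mathcal F(\O)[\eta]+s'(0)\delta\mathcal F(\O)[\xi]=0$ rearranges to
\[
\frac{\delta\mathcal F(\O)[\eta]}{\int_\O\dive\eta\,dx}=\frac{\delta\mathcal F(\O)[\xi]}{\int_\O\dive\xi\,dx}=:-\Lambda.
\]
Applying the same identity to any two test fields with disjoint supports (interposing a third auxiliary field when needed to reduce to that case) shows that this common ratio does not depend on the particular pair, so $\Lambda$ is a single global constant and $\delta\mathcal F_\Lambda(\O)[\eta]=0$ for every $\eta\in C^\infty_c(\R^d;\R^d)$. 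To obtain $\Lambda\ge 0$ I would invoke \cref{rem.saturated}: since $\O$ also minimizes $\mathcal F$ among $\widetilde\O\subset D$ with $|\widetilde\O|\le m$, for any $\xi$ satisfying $\int_\O\dive\xi\,dx<0$ the flow $\Phi_t^\xi(\O)$ remains admissible for small $t>0$, forcing $\delta\mathcal F(\O)[\xi]\ge 0$ and hence $\Lambda\ge 0$.

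For the second-variation inequality I would expand $g''(0)$ via the chain rule and exploit two identities that follow from the disjointness of $\spt\eta$ and $\spt\xi$. First, the flow of $\eta-c\xi$ coincides with $\O_{t,-ct}$, so
\[
\delta^2\mathcal F(\O)[\eta-c\xi]=\partial_{tt}\mathcal F\big|_{(0,0)}-2c\,\partial_{ts}\mathcal F\big|_{(0,0)}+c^2\partial_{ss}\mathcal F\big|_{(0,0)};
\]
second, $(t,s)\mapsto|\O_{t,s}|$ is additively separable in $t$ and $s$, hence $\partial_{ts}|\O_{t,s}|\big|_{(0,0)}=0$ and a direct inspection of \eqref{e:expression} gives $\delta^2|\O|[\eta-c\xi]=\delta^2|\O|[\eta]+c^2\,\delta^2|\O|[\xi]$. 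Differentiating $|\O_{t,s(t)}|\equiv m$ twice then produces $s''(0)\int_\O\dive\xi\,dx=-\delta^2|\O|[\eta-c\xi]$, and substituting this together with $\delta\mathcal F(\O)[\xi]=-\Lambda\int_\O\dive\xi\,dx$ into the chain-rule expansion of $g''(0)$ yields
\[
0\le g''(0)=\delta^2\mathcal F(\O)[\eta-c\xi]+\Lambda\,\delta^2|\O|[\eta-c\xi]=\delta^2\mathcal F_\Lambda(\O)[\eta-c\xi].
\]
The principal technical issue is merely the bookkeeping through the implicit function $s(t)$; the disjoint-support hypothesis is exactly what makes all mixed terms collapse and allows $\Lambda$ to be extracted as a single global constant, rather than a genuine obstacle.
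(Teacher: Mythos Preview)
Your proposal is correct and follows essentially the same Lagrange-multiplier strategy as the paper: construct a volume-preserving one-parameter family via the implicit function theorem, read off $\Lambda$ from the first-order condition (with $\Lambda\ge 0$ coming from \cref{rem.saturated}), and then identify the second-order condition with $\delta^2\mathcal F_\Lambda(\O)[\eta-c\xi]\ge 0$ using that the disjoint-support hypothesis kills the cross terms in the volume. The only cosmetic difference is that the paper parametrizes the competitors by the additive map $x\mapsto x+t\eta(x)+f(t)\xi(x)$ (\cref{l1}) rather than by the composition of flows $\Phi_t^\eta\circ\Phi_{s(t)}^\xi$, and fixes a single reference field $\eta_0$ with $\int_\O\dive\eta_0=1$ to define $\Lambda$; your remark about interposing a third auxiliary field to make $\Lambda$ global is a clean way to handle the case where $\eta$ and the reference field overlap.
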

\begin{remark}\label{r:deltaA}
The second variation in \eqref{e:deriv-F} can be calculated by solving the system \eqref{e:deltau}, with $\delta A$ and $\delta^2 A$ defined starting from the field
$$x \mapsto \eta(x) - \frac{\int_\O \dive\eta\,dx}{\int_\O \dive\xi\,dx}\,\xi(x).$$
\end{remark}
Before proceeding with the proof, we start by proving the existence of a volume-preserving variation for every open shape.
\begin{lemma}[Volume-preserving variations]\label{l1}

Let $D$ be a bounded open set in $\R^d, \Omega \subseteq D$ be an open set and $\eta \in C^\infty_c(D;\R^d)$ be a compactly supported vector field.\\ Then, for every vector field $\xi \in C^\infty_c(D;\R^d)$ satisfying $$\delta |\O|[\xi] = \int_\O \dive\xi \,dx \neq 0\quad\mbox{and}\quad\text{\rm supp}\,\eta \cap \text{\rm supp}\,\xi=\emptyset,$$
there exists $t_0>0$ and $f\in C^\infty((-t_0,t_0))$ such that the map
$$\Phi_t(x)=\text{\rm Id}(x) + t\eta(x)+f(t)\xi(x)$$
preserves the volume of $\O$, for every $t \in (-t_0,t_0)$, that is
$$|\Phi_t(\O)|=|\O|\qquad\text{for every}\qquad t \in (-t_0,t_0).$$
Moreover, the following identities hold true
\be\label{es}
\begin{aligned}
&f(0)=0,\quad  f'(0) \int_\O \dive\xi\,dx= - \int_\O \dive\eta\,dx\\
&f''(0) \int_\O \dive\xi\,dx= - \frac12
\int_\O (\dive\eta)^2+\eta\cdot\nabla(\dive\eta)  + (f'(0))^2 (\dive\xi)^2+(f'(0))^2\xi\cdot\nabla(\dive\xi)\,dx.
\end{aligned}
\ee
\end{lemma}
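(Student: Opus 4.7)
The plan is to find $f$ via the implicit function theorem applied to the two-parameter volume function
\[
G(t,s) := |\Psi_{t,s}(\Omega)| - |\Omega|, \qquad \Psi_{t,s}(x) := x + t\eta(x) + s\xi(x).
\]
For $(t,s)$ in a small enough neighborhood of $(0,0)$, the Jacobian $I + tD\eta + sD\xi$ stays uniformly invertible on the compact set $\text{supp}\,\eta\cup\text{supp}\,\xi$ (and is just the identity outside), so $\Psi_{t,s}$ is a smooth diffeomorphism of $\R^d$ supported inside $D$. Hence $G$ is a smooth function of $(t,s)$ near the origin, it vanishes there, and its first partials are given by the standard first-variation formula of \eqref{e:expression}, namely $\partial_t G(0,0) = \int_\Omega \dive\eta\,dx$ and $\partial_s G(0,0) = \int_\Omega \dive\xi\,dx$. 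The assumption $\int_\Omega \dive\xi\,dx\neq 0$ allows an application of the implicit function theorem, producing $t_0>0$ and a smooth $f:(-t_0,t_0)\to\R$ with $f(0)=0$ and $G(t,f(t))\equiv 0$. Setting $\Phi_t := \Psi_{t,f(t)}$ gives the claimed volume-preserving map.

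The first identity in \eqref{es} follows by differentiating $G(t,f(t))\equiv 0$ once at $t=0$: the relation $\partial_tG(0,0)+f'(0)\partial_sG(0,0)=0$ immediately rearranges into the stated formula. For $f''(0)$ I would differentiate a second time, obtaining
\[
\partial_{tt}G(0,0) + 2f'(0)\partial_{ts}G(0,0) + (f'(0))^2 \partial_{ss}G(0,0) + f''(0)\partial_sG(0,0) = 0,
\]
and then invoke the disjoint support hypothesis to simplify. The key observation is that at every point $x$ at most one of $D\eta(x),D\xi(x)$ is nonzero, so the Taylor expansion of $\det(I+tD\eta+sD\xi)$ has no mixed $ts$-terms at any order; in particular the mixed partial $\partial_{ts}G(0,0)$ vanishes, while $\partial_{tt}G(0,0)$ and $\partial_{ss}G(0,0)$ reduce to the pure one-parameter second variations of the volume along $\eta$ and $\xi$. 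These are precisely the integrals appearing in the second line of \eqref{e:expression}, and substituting them into the relation above yields the formula for $f''(0)$ in \eqref{es} (the $1/2$ factor comes from the standard Taylor-coefficient convention $f(t)=f'(0)t+\tfrac{1}{2}f''(0)t^2+O(t^3)$).

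There is no real analytical obstacle here; the argument is essentially bookkeeping once the implicit function theorem setup is in place. The only point that deserves care is the pointwise vanishing of the cross terms in the Jacobian expansion, which is the precise role played by the disjoint-support hypothesis: it is what decouples the perturbations generated by $\eta$ and $\xi$ and, in turn, gives $\partial_{ts}G(0,0)=0$ and the clean additive form of the right-hand side of the second identity in \eqref{es}.
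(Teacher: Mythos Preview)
Your proposal is correct and follows essentially the same argument as the paper: define the two-parameter volume function, apply the implicit function theorem using the hypothesis $\int_\Omega\dive\xi\neq 0$, and differentiate the implicit relation twice, using the disjoint-support assumption to kill the mixed partial. The only shaky spot is your parenthetical explanation of the factor $\tfrac12$ in \eqref{es}: it does \emph{not} arise from Taylor-coefficient conventions (your chain-rule identity $\partial_{tt}G+(f'(0))^2\partial_{ss}G+f''(0)\partial_sG=0$ already gives $f''(0)$ directly), and in fact the paper's own derivation produces the formula without that factor, so this appears to be a typo in the statement rather than something you need to justify.
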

\begin{proof}
  The proof is a straightforward application of the implicit function Theorem to the map
  $$
  \varphi(t,s)= |(\text{\rm Id}+ t \eta + s \xi)(\O)|- |\O|,\quad \mbox{such that}\quad \varphi(0,0)=0.
  $$
  Indeed, since
  $$
  \partial_s \varphi(0,0) = \int_\O \dive\xi \, dx \neq 0,
  $$
  there exists $t_0>0$, sufficiently small, and $f\in C^\infty((-t_0,t_0))$ such that
  $$
  \varphi(t,f(t))=0\qquad\mbox{for every}\qquad t \in (-t_0,t_0).
  $$
%with $\Phi_t(x)=\text{\rm Id}(x) + t\eta(x)+f(t)\xi(x)$.
Finally, the estimates in \eqref{es} follow by a direct computation. Indeed, since
$$
0=\frac{\partial}{\partial t}\bigg|_{t=0}\varphi(t,f(t))= \partial_t\varphi(0,0) + f'(0)\partial_s \varphi(0,0)$$
we have that
$$
f'(0)= -\frac{\partial_t\varphi(0,0)}{\partial_s \varphi(0,0)} = -\frac{\delta |\O|[\eta]}{\delta |\O|[\xi]}.
$$
Similarly,
$$
0=\frac{\partial^2}{\partial t^2}\bigg|_{t=0}\varphi(t,f(t))= \partial^2_t\varphi(0,0) + 2 \partial_t\partial_s \varphi(0,0) f'(0) + f''(0)\partial_s \varphi(0,0)+ (f'(0))^2\partial^2_s\varphi(0,0)$$
which implies
\begin{equation}\label{e:f''}
\begin{split}
f''(0) &= -\frac{1}{\partial_2 \varphi(0,0)}\left(
\partial^2_t\varphi(0,0) + 2 \partial_t\partial_s \varphi(0,0) f'(0) + (f'(0))^2\partial^2_s\varphi(0,0)\right)\\
&= -\frac{1}{\delta |\O|[\xi]}\left(
\delta^2 |\O|[\eta] + 2 f'(0) \delta^2 |\O|[\eta,\xi] + (f'(0))^2 \delta^2 |\O|[\xi]\right)\\
&= -\frac{1}{\delta |\O|[\xi]}\left(
\delta^2 |\O|[\eta] + (f'(0))^2 \delta^2 |\O|[\xi]\right)
\end{split}
\end{equation}
since $\text{\rm supp}\,\eta \cap \text{\rm supp}\,\xi=\emptyset$. The claim now follows by \eqref{e:expression}.
\end{proof}
The proof of \cref{t:var} is now a consequence of the following propositions. In \cref{p:first}, we show the existence of a Lagrange multiplier along inner variations. Then, in \cref{p:second}, we prove the validity of a stability-type condition with respect to internal variations.
\begin{prop}[First variation]\label{p:first}
Let $D$ be a bounded open set in $\R^d$ and $\Omega \subseteq D$ be a minimizer to \eqref{e:heat-conduction-problem}. Then, there exists $\Lambda \geq 0$ such that
  $$
  \delta\mathcal{F}(\O)[\eta ] + \Lambda \delta|\O|[\eta]=0,
  $$
  for every $\eta \in C^\infty_c(D;\R^d)$.
\end{prop}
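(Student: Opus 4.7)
The plan is a standard Lagrange multiplier argument, exactly as sketched in the paragraph immediately preceding the statement.

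\medskip

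\textbf{Step 1: an admissible anchor field.} Since $\Omega$ is open with $|\Omega|=m>0$, I would fix once and for all a small closed ball $B\subset\Omega$ and a compactly supported bump $\xi_0\in C^\infty_c(D;\R^d)$ (for instance a radially outward vector field concentrated in $B$) so that
\[
\delta|\Omega|[\xi_0]=\int_\Omega\mathrm{div}\,\xi_0\,dx\neq 0.
\]

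\textbf{Step 2: volume-preserving two-parameter family.} For an arbitrary $\eta\in C^\infty_c(D;\R^d)$, consider the two-parameter family $\Psi_{s,t}:=\mathrm{Id}+s\xi_0+t\eta$, which for $|s|,|t|$ small is a diffeomorphism of $D$ onto itself. Set $g(s,t):=|\Psi_{s,t}(\Omega)|-m$; then $g(0,0)=0$ and $\partial_s g(0,0)=\delta|\Omega|[\xi_0]\neq 0$, so the implicit function theorem produces a smooth $t\mapsto s(t)$, defined near $0$, with $s(0)=0$, $g(s(t),t)\equiv 0$, and
\[
s'(0)=-\frac{\delta|\Omega|[\eta]}{\delta|\Omega|[\xi_0]}.
\]
Thus $\Omega_t:=\Psi_{s(t),t}(\Omega)$ is an admissible competitor for \eqref{e:heat-conduction-problem}.

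\textbf{Step 3: the Euler--Lagrange identity.} By the minimality of $\Omega$, the scalar function $t\mapsto\mathcal{F}(\Omega_t)$ has a minimum at $t=0$, so its derivative vanishes. The velocity field of $\Psi_{s(t),t}$ at $t=0$ is $\eta+s'(0)\xi_0$; invoking the first-variation formula \eqref{e:deriv-F} (which is linear in the velocity field, since $\delta A$ from \eqref{e:first-and-second-variation-A} is linear in the generator) and the chain rule gives
\[
0=\delta\mathcal{F}(\Omega)[\eta]+s'(0)\,\delta\mathcal{F}(\Omega)[\xi_0].
\]
Substituting the formula for $s'(0)$ and defining
\[
\Lambda:=-\frac{\delta\mathcal{F}(\Omega)[\xi_0]}{\delta|\Omega|[\xi_0]},
\]
we arrive at the desired identity $\delta\mathcal{F}(\Omega)[\eta]+\Lambda\,\delta|\Omega|[\eta]=0$. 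Independence of $\Lambda$ from the anchor $\xi_0$ is automatic: applying the identity with $\eta$ replaced by any other admissible anchor $\xi_0'$ forces the two candidate values of $\Lambda$ to coincide.

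\textbf{Step 4: the sign $\Lambda\geq 0$.} Here I would invoke \cref{rem.saturated}: $\Omega$ also minimizes $\mathcal{F}$ under the looser constraint $|\widetilde\Omega|\leq m$. Pick any $\eta\in C^\infty_c(D;\R^d)$ with $\delta|\Omega|[\eta]<0$, e.g.\ an inward radial bump compactly supported in $\Omega$. Then, for $t>0$ small, the flow $\Phi_t$ of $\eta$ produces $|\Phi_t(\Omega)|\leq m$, so $\Phi_t(\Omega)$ is admissible and $\mathcal{F}(\Phi_t(\Omega))\geq\mathcal{F}(\Omega)$; letting $t\searrow 0$ gives $\delta\mathcal{F}(\Omega)[\eta]\geq 0$. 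Combined with Step~3, this forces $\Lambda\,\delta|\Omega|[\eta]\leq 0$, and since $\delta|\Omega|[\eta]<0$ we conclude $\Lambda\geq 0$.

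\medskip

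The only non-trivial analytic ingredient is the differentiability at $t=0$ of $\mathcal{F}(\Omega_t)$, which is already packaged in the first-variation formula \eqref{e:deriv-F} derived in \cite[Section~2]{BMMTV}. The rest is elementary bookkeeping: the most delicate point is really the choice of the anchor field $\xi_0$ and the verification that the resulting $\Lambda$ is anchor-independent, but both are straightforward because $\Omega$ has positive measure and accommodates any number of disjoint small bumps.
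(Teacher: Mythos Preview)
Your overall Lagrange-multiplier strategy is the same as the paper's, but the specific anchor field you propose does \emph{not} satisfy the key condition $\delta|\Omega|[\xi_0]\neq 0$. If $\xi_0\in C^\infty_c(D;\R^d)$ is supported in a closed ball $B\subset\Omega$ (``a radially outward vector field concentrated in $B$''), then by the divergence theorem
\[
\delta|\Omega|[\xi_0]=\int_\Omega\dive\xi_0\,dx=\int_B\dive\xi_0\,dx=\int_{\partial B}\xi_0\cdot\nu\,d\HH^{d-1}=0,
\]
since $\xi_0$ vanishes on $\partial B$. Equivalently, for small $s$ the map $\mathrm{Id}+s\xi_0$ is a diffeomorphism of $\Omega$ onto itself, so the volume is unchanged to all orders. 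The same issue recurs in Step~4: ``an inward radial bump compactly supported in $\Omega$'' again has $\delta|\Omega|[\eta]=0$, not $<0$, so it cannot be used to detect the sign of $\Lambda$.

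What is actually needed is a field whose support meets $\partial\Omega\cap D$ and which pushes mass across the free boundary. The paper handles this by invoking \cite[Lemma~11.3]{V}, which produces $\eta_0\in C^\infty_c(D;\R^d)$ with $\int_\Omega\dive\eta_0\,dx=1$; this requires using that $0<|\Omega|<|D|$ so that $\partial\Omega\cap D\neq\emptyset$. Once you replace your anchor by such a field (and likewise in Step~4 use $-\eta_0$), the rest of your argument goes through and matches the paper's proof.
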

\begin{proof}
Let $\eta,\xi\in C^\infty_c(D;\R^d)$ be two smooth vector field with disjoint compact supports, satisfying
$$
\textrm{supp}\,\eta \cap \O \neq \emptyset\qquad\mbox{and}\qquad \textrm{supp}\,\xi \cap \O \neq \emptyset.
$$
Then, for $t_0>0$ sufficiently small we consider the volume-preserving flow from \cref{l1}
$$\Phi \colon (-t_0,t_0)\times \R^d\to \R^d\ ,\qquad \Phi_t(x) = \text{\rm Id}(x) + t \eta(x) + f(t) \xi(x),$$
with $f$ satisfying \eqref{es}. Now, since $|\Phi_t(\O)|=|\O|$ for every $t$, we have
$$
0=\frac{d}{dt}\bigg|_{t=0}|\Phi_t(\O)|= \delta |\O|[\eta + f'(0)\xi] = \delta|\O|[\eta] + f'(0) \delta |\O|[\xi],
$$
and, by the minimality of $\Omega$ among the sets of the same volume, we have
$$
\frac{d}{dt}\bigg|_{t=0}\mathcal{F}(\Phi_t(\O))=0.
$$
Finally, by exploiting the linearity of the variation of $\mathcal{F}$ with respect to smooth vector field with compact supports, we infer that
$$
0=\frac{d}{dt}\bigg|_{t=0}\mathcal{F}(\Phi_t(\O)) = \delta \mathcal{F}(\O)[\eta] + f'(0)\delta\mathcal{F}(\O)[\xi] = \delta \mathcal{F}(\O)[\eta] -\left(\frac{\delta|\O|[\eta]}{\delta |\O|[\xi]}\right)\delta\mathcal{F}(\O)[\xi]
$$
which implies that
$$
\frac{\delta \mathcal{F}(\O)[\eta]}{\delta|\O|[\eta]} = \frac{\delta \mathcal{F}(\O)[\xi]}{\delta |\O|[\xi]},
$$
for every $\eta, \xi \in C^\infty_c(D;\R^d)$. Now, by \cite[Lemma 11.3]{V}, there is $\eta_0 \in C^\infty_c(D;\R^d)$ such that
$$\delta |\O|[\eta_0] = \int_\O \dive\eta_0\,dx = 1.$$
Then choosing $\xi=\eta_0$, for every $\eta \in C^\infty_c(D;\R^d)$ we have
$$
\delta \mathcal{F}(\O)[\eta]+\Lambda \delta|\O|[\eta]=0
$$
with $\Lambda=-\delta\mathcal{F}(\O)[\eta_0]$. Ultimately, it is straightforward to notice that $\Lambda\geq 0$. If not, it holds $\delta\mathcal F(\Omega)[\eta_0]>0$, and we consider the flow associate to the vector field $\xi = -\eta_0\in C^\infty_c(D;\R^d)$, where $\eta_0$ is again as in \cite[Lemma 11.3]{V}. Since $\delta |\O|[\xi] = -1$, there exists $t_0>0$ small such that
$$
|\Phi_t(\O)|\leq |\O|,\qquad \mbox{for }t \in [0,t_0).
$$
Therefore, by  \cref{rem.saturated}, we can consider the family $\Phi_t(\O)$ as admissible competitor for $\O$, for $t$ sufficiently small. Finally, the contradiction follows by computing the shape derivative along the flow generated by $\xi$, that is
$$
\delta\mathcal{F}(\O)[\xi] = -\delta \mathcal{F}(\O)[\eta_0] = \Lambda\geq 0,
$$
which contradicts the minimality of the set $\O$.
\end{proof}
\begin{prop}[Second variation]\label{p:second}
Let $D$ be a bounded open set in $\R^d$ and $\Omega \subseteq D$ be a minimizer to \eqref{e:heat-conduction-problem}. Given $\Lambda\geq0$ be the constant from \cref{p:first}, then
  $$
  \delta^2 \mathcal{F}(\O)\left[\eta - \frac{\int_\O \dive\eta\,dx}{\int_\O \dive\xi\,dx}\xi\right] + \Lambda \delta^2 |\O|\left[\eta - \frac{\int_\O \dive\eta\,dx}{\int_\O \dive\xi\,dx}\xi\right]\geq 0,
  $$
  for every couple of smooth vector field $\eta,\xi \in C^\infty_c(D;\R^d)$ with disjoint support and satisfying $$
  \delta |\O|[\xi] = \int_\O \dive\xi\,dx \neq 0.
  $$
  \end{prop}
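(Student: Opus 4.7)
The plan is to work with the Lagrangian $\mathcal{F}_\Lambda := \mathcal{F}+\Lambda|\cdot|$ associated to the multiplier $\Lambda\geq 0$ produced by \cref{p:first}. By that proposition, $\delta\mathcal{F}_\Lambda(\O)[\zeta]=0$ for \emph{every} $\zeta\in C^\infty_c(D;\R^d)$, so every first-order contribution to a Taylor expansion of $\mathcal{F}_\Lambda$ at $\O$ vanishes. This is the crucial algebraic fact that will absorb the higher-order corrections generated by the non-flow structure of the volume-preserving family from \cref{l1}.

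Now apply the family $\Phi_t(x)=x+t\eta(x)+f(t)\xi(x)$ of \cref{l1}, for which $|\Phi_t(\O)|\equiv|\O|$ when $|t|<t_0$. By minimality of $\O$ (together with \cref{rem.saturated}) we have $\mathcal{F}(\Phi_t(\O))\geq\mathcal{F}(\O)$, and adding the identity $\Lambda|\Phi_t(\O)|=\Lambda|\O|$ yields $\mathcal{F}_\Lambda(\Phi_t(\O))\geq\mathcal{F}_\Lambda(\O)$, hence
\[
\frac{d^2}{dt^2}\bigg|_{t=0}\mathcal{F}_\Lambda(\Phi_t(\O))\geq 0.
\]
The remaining task is to identify this second derivative with $\delta^2\mathcal{F}_\Lambda(\O)[W]$ for $W:=\eta+f'(0)\xi$; by \cref{l1}, $f'(0)=-\int_\O\dive\eta\,dx\big/\int_\O\dive\xi\,dx$, so $W$ is exactly the vector field appearing in the statement.

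For the identification, I would compare $\Phi_t$ with the honest flow $\Psi_t$ of the vector field $W$. Both families satisfy $\Psi_0=\Phi_0=\text{Id}$ and $\dot\Phi_0=\dot\Psi_0=W$, hence $\Phi_t-\Psi_t=t^2R_t$ for a smooth family $R_t$ of compactly supported vector fields in $D$. A first-order Taylor expansion of $\mathcal{F}_\Lambda$ around $\Psi_t(\O)$, combined with the vanishing of $\delta\mathcal{F}_\Lambda(\O)$ on every smooth compactly supported field, then shows that $\mathcal{F}_\Lambda(\Phi_t(\O))-\mathcal{F}_\Lambda(\Psi_t(\O))=O(t^3)$. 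Consequently the two second derivatives at $t=0$ coincide, and by definition \eqref{e:deriv-F} the one along $\Psi_t$ is $\delta^2\mathcal{F}_\Lambda(\O)[W]$. Combining this with the minimality inequality above yields $\delta^2\mathcal{F}_\Lambda(\O)[W]\geq 0$, which is the claim.

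The main subtlety is that $\Phi_t$ is not the flow of a single vector field, so the naive second-variation formula applied to the velocity $W$ would be contaminated by a first-order term coming from the $O(t^2)$ discrepancy between $\Phi_t$ and the genuine flow of $W$, i.e., from the acceleration $\ddot\Phi_0=f''(0)\xi$. The passage to $\mathcal{F}_\Lambda$ is precisely what resolves this obstruction: every first-order contribution is killed by the Lagrange-multiplier identity of \cref{p:first}, so that the second variation of $\mathcal{F}_\Lambda$ along $\Phi_t$ coincides with the standard second variation along the flow $\Psi_t$ of $W$, and the conclusion reduces to a direct minimality argument.
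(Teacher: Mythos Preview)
Your proposal is correct and follows essentially the same route as the paper: both use the volume-preserving family $\Phi_t=\text{Id}+t\eta+f(t)\xi$ from \cref{l1}, derive a second-order inequality from minimality, and recognize that the non-flow acceleration $\ddot\Phi_0=f''(0)\xi$ contaminates the second derivative only through a \emph{first}-variation term, which is then killed via \cref{p:first}. The only difference is bookkeeping: the paper Taylor-expands $\mathcal{F}(\Phi_t(\O))$ and uses the explicit formula \eqref{e:f''} for $f''(0)$ to rewrite $\delta\mathcal{F}(\O)[f''(0)\xi]$ as $\Lambda\,\delta^2|\O|[\eta+f'(0)\xi]$, whereas you work with $\mathcal{F}_\Lambda$ from the outset so that $\delta\mathcal{F}_\Lambda(\O)[f''(0)\xi]=0$ directly and the value of $f''(0)$ is never needed.
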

\begin{proof}
  Let $\eta,\xi\in C^\infty_c(D;\R^d)$ be smooth vector fields with disjoint compact supports, satisfying $$
  \textrm{supp}\,\eta \cap \O \neq \emptyset, \qquad \textrm{supp}\,\xi \cap \O \neq \emptyset\quad\mbox{and}\quad \int_\O \dive\xi\,dx\neq 0.
   $$
   Then, consider the volume-preserving flow from \cref{l1}
   $$\Phi \colon (-t_0,t_0)\times \R^d\to \R^d\ ,\qquad \Phi_t(x) = \text{\rm Id}(x) + t \eta(x) + f(t) \xi(x),$$
with $f$ satisfying \eqref{es}. Then, by exploiting the differentiability of the functional $\mathcal{F}$ along $\Phi_t$ (see \cite[Proposition 2.8]{BMMTV}), we infer that
\begin{align*}
\mathcal{F}(\Phi_t(\O)) =\, \mathcal{F}(\O) &+ t\,\delta \mathcal{F}(\O)\big[\eta + f'(0)\xi\big]\\
&+ \frac12t^2\Big(\delta^2 \mathcal{F}(\O)\big[\eta + f'(0)\xi\big] + \delta \mathcal{F}(\O)\big[f''(0)\xi\big]\Big)+ o(t^2).
\end{align*}
First we note that, using~\eqref{e:f''},
$$
\delta\mathcal{F}(\O)[f''(0)\xi] = -\frac{\delta \mathcal{F}(\O)[\xi]}{\delta |\O|[\xi]}\left(\delta^2 |\O|[\eta] + (f'(0))^2 \delta^2 |\O|[\xi]\right) = \Lambda\left(\delta^2 |\O|[\eta] + (f'(0))^2 \delta^2|\O|[\xi]\right),
$$
which, together with \cref{p:first} and the minimality of $\Omega$, implies that
$$
\delta^2\mathcal{F}(\O)[\eta+f'(0)\xi] + \Lambda\left(\delta^2 |\O|[\eta] +(f'(0))^2 \delta^2|\O|[\xi]\right) \geq 0.
$$
Ultimately, since $\text{\rm supp}\,\eta \cap \text{\rm supp}\,\xi = \emptyset$, we can rewrite the last two terms in the left-hand side as
$$
\delta^2\mathcal{F}(\O)[\eta+f'(0)\xi] + \Lambda \delta^2 |\O|[\eta +f'(0)\xi]\geq 0.
$$
The claim now follows by the first estimate in \eqref{es}.
\end{proof}
\begin{remark}
  In this paper, the focus is on the volume-preserving first and second variations to the functionals introduced in \eqref{e:heat-conduction-problem} and \eqref{e:one-phase-constrained}. Nonetheless, the results of this section are still valid under some mild assumptions on the functionals as the differentiability of the state functions with respect to smooth internal variations with compact support (in the sense of \cite[Section 2.1]{BMMTV}).
\end{remark}
\section{Stable homogeneous solutions of the one-phase Bernoulli problem}\label{s:AC}

In this section we study the singular set of the stable global solutions of the one-phase Bernoulli problem (see \cref{def:global-stable-solutions} for the definition of global stable solution) along a suitable class of internal variations. The main results is \cref{t:AC}, which we use in \cref{t:heat-conduction-singular} (resp. \cref{t:one-phase-singular}) in  \cref{s:agcafs}, in order to estimate the dimension of the singular set of the free boundary of the optimal sets for \eqref{e:heat-conduction-problem} (resp. \eqref{e:one-phase-constrained}). The present section is of independent interest and builds up on the theory developed in \cite[Section 7]{BMMTV}.\\

Before going into the proof of \cref{t:AC}, let us recall the notion of global stable solution to the one-phase problem (Alt-Caffarelli) introduced in \cite{BMMTV}. Given a Lipschitz function $u\colon \R^d\to \R$ and $\Lambda>0$ we consider the Alt-Caffarelli functional
$$
\mathcal G_\Lambda(u):=\int_{\R^d}|\nabla u|^2\,dx+\Lambda|\{u>0\}\cap \R^d|,\quad\text{with}\quad \Lambda>0\,.
$$
Then, let $\O_u = \{u>0\}$ and
$$\delta \mathcal G_\Lambda: C^{0,1}(\R)\times C^\infty_c(\R^d;\R^d)\to\R,\qquad\delta^2\mathcal G_\Lambda: C^{0,1}(\R)\times C^\infty_c(\R^d;\R^d)\to\R\,,$$
be the functionals defined as follows: for $\eta \in C^\infty_c(\R^d;\R^d)$ set
\begin{align}\label{e:global-stable-solutions-delta}
\begin{aligned}
\delta \mathcal G_\Lambda(u)[\eta]&:=\int_{\O_u} \Big(\nabla u \cdot \delta A\nabla u+ \Lambda \dive\eta\Big)\, dx\,,\\
\delta^2 \mathcal G_\Lambda(u)[\eta]&:=\,
\int_{\O_u}2\nabla u\cdot(\delta^2 A)\nabla u-2|\nabla (\delta u)|^2 +\Lambda\Big((\dive\eta)^2+\eta\cdot\nabla(\dive\eta)\Big)\,dx,
\end{aligned}
\end{align}
where $\delta A,\,\delta^2A$ are defined in \eqref{e:first-and-second-variation-A}
and where $\delta u\in \dot H^1_0(\O_u)$ is the weak solution to
\be\label{e:global-stable-solutions-delta-u}
-\Delta(\delta u)=\dive\big((\delta A)\nabla u\big)\quad\text{in }\O_u\ ,\qquad\delta u\in \dot H^1_0(\O_u).
\ee

\begin{definition}[Global stable solutions of the one-phase problem]\label{def:global-stable-solutions}
A function $u:\R^d\to\R$ is said to be a ``global stable solution of the one-phase problem'' if, for every smooth vector field with compact support $\eta \in C^\infty_c(\R^d;\R^d)$, we have:
\be\label{e:global-stable-solutions-main}
\delta \mathcal G_\Lambda(u)[\eta]=0\qquad\text{and}\qquad \delta^2 \mathcal G_\Lambda(u)[\eta]\ge0\,,
\ee
and if the following conditions are fulfilled:
\begin{enumerate}
\item[$\qquad(\mathcal A1)$]\label{item:def-stable-1} $u$ is globally Lipschitz continuous and non-negative on $\R^d$\rm ;\it
\item[$\qquad(\mathcal A2)$]\label{item:def-stable-2} $u$ is harmonic in the open set $\O_u$\rm;\it
\item[$\qquad(\mathcal A3)$]\label{item:def-stable-3} there is a constant $c>0$ such that
$$|B_r(x_0)\cap \O_u| \leq (1-c)|B_r|,$$
for every $x_0\in \R^d\setminus\O_u$ and every $r>0$\rm ;\it
\item[$\qquad(\mathcal A4)$]\label{item:def-stable-4} $0\in\partial\O_u$ and there is a constant $C > 0$ such that
$$\sup_{B_r(x_0)} u \ge \frac{1}{C} r\quad\text{for every}\quad x_0\in \overline\O_{u}\,\text{ and }\,r>0\,;$$
\item[$\qquad(\mathcal A5)$]\label{item:def-stable-5} there is a constant $C>0$ such that, for every $R>0$,
$$\left|\int_{B_{R}}\nabla u\cdot\nabla\varphi\,dx\right|\le C R^{d-1}\|\varphi\|_{L^\infty(B_{R})}\quad\text{for every}\quad \varphi\in C^\infty_c(B_R)\,.$$
\end{enumerate}
\end{definition}
\begin{definition}[Critical dimension for stable solutions]\label{def:global-stable-solution-critical-dimension}
We define $d^\ast$ to be the smallest dimension admitting a $1$-homogeneous global stable solution $u:\R^d\to\R$ with $0\in\text{\rm Sing}(\partial\O_u)$.
\end{definition}
In \cite{BMMTV}, with Buttazzo and Maiale, we developed a theory about the regularity of the stable global solutions of the one-phase Alt-Caffarelli problem and we proved the existence of a critical dimension $d^\ast$ (in the sense of \cref{def:global-stable-solution-critical-dimension}), in which a singular global stable solution appears for the first time (see \cite[Theorem 7.8]{BMMTV}), and we show that the $d^\ast \in \{5,6,7\}$ (see \cite[Theorem 7.9]{BMMTV}).
In \cref{t:AC} we extend the previous analysis to the case of stable solutions of the one-phase problem with respect to mean-zero variations.
\begin{definition}[Global stability along mean-zero variations of the one-phase problem]\label{def:global-stable-solutions-meanzero}
A function $u:\R^d\to\R$ is said to be a global stable solution of the one-phase problem ``along mean-zero variations'' if:
\begin{enumerate}
\item[\rm(1)] conditions $(\mathcal A1), (\mathcal A2), (\mathcal A3)$, $(\mathcal A4)$, $(\mathcal A5)$ of \cref{def:global-stable-solutions} are satisfied\,\rm;\it
\item[\rm(2)] for every couple $\eta,\xi \in C^\infty_c(\R^d;\R^d)$ of smooth vector fields with disjoint compact supports satisfying
    $$
\int_{\O_u} \dive\xi \,dx\neq 0,
$$
it holds that
\be\label{e:stab.mean}
\delta \mathcal G_\Lambda(u)[\eta]=0\qquad\text{and}\qquad \delta^2 \mathcal G_\Lambda(u)\left[\eta - \frac{\int_{\O_u} \dive\eta\,dx}{\int_{\O_u} \dive\xi\,dx}\xi\right]\ge0\,.
\ee
\end{enumerate}
\end{definition}
We highlight that the conditions $(1)$-$(2)$ in \cref{def:global-stable-solutions-meanzero} are satisfied by the blow-ups of numerous shape optimization problems, for instance by the state functions of the domains arising from \eqref{e:heat-conduction-problem} (see  \cref{s:agcafs}), and naturally, by the global minimizers of the Alt-Caffarelli functional.
\subsection{Proof of \cref{t:AC}}
The proof consists in showing that the hypotheses from \cref{def:global-stable-solutions-meanzero} imply the stability condition \eqref{e:global-stable-solutions-main}, that is global stable solutions along mean-zero variations are actually stable in the sense of \cref{def:global-stable-solutions}. Then, the claim follows by applying \cite[Theorem 7.8]{BMMTV} and \cite[Theorem 7.9]{BMMTV}.\\

In view of the classical Federer's reduction principle (see for instance \cite[Proof of Proposition 10.13]{V}), it is not restrictive to assume that $\partial\O_u$ has an isolated singularity at the origin, that is
$$
\text{\rm Sing}(\partial\O_u)=\{0\} \qquad \mbox{and} \qquad \text{\rm Reg}(\partial\O_u)=\partial \O_u\setminus\{0\}
$$
and that $u$ is a classical solution to the PDE
\be\label{e:smooth-one-phase}
\Delta u=0\quad\text{in }\O_u\,,\qquad |\nabla u|=\sqrt{\Lambda}\quad\text{on }\text{\rm Reg}(\partial\O_u)\,.
\ee
We stress that, in light of the results from \cite{agcasp} (which we recalled in \cref{t:ACS}), $\text{\rm Reg}(\partial\O_u)$ is locally the graph of a $C^\infty$ smooth function.
%(see also \cite{BMMTV} in which a similar result is obtained by higher order Boundary Harnack Principle \cite{TVT}).
Now, given $\eta \in C^\infty_c(\R^d;\R^d)$, we fix
$$R>0\qquad\text{and}\qquad x_0 \in \text{\rm Reg}(\partial \O_u)\cap \partial B_{2R},$$ such that $$\overline{B_R(x_0)}\cap \text{\rm supp}\,\eta = \emptyset\qquad\mbox{and}\qquad 0\not\in \overline{B_R(x_0)}.$$
Then, by \eqref{e:stab.mean} and the fact that $\xi\mapsto \delta^2 \mathcal{G}_\lambda(u)[\xi]$ is quadratic, for every $\xi \in C^\infty_c(B_R(x_0);\R^d)$ with support disjoint from that of $\eta$, we have
\be\label{e:dai}
0 \leq \delta^2 \mathcal G_\Lambda(u)\left[\eta - \xi\int_{\O_u} \dive\eta\,dx\right] = \delta^2 \mathcal G_\Lambda(u)[\eta] + \left(\int_{\O_u} \dive\eta\,dx\right)^2 \delta^2 \mathcal{G}_\lambda(u)[\xi].
\ee
Since $\overline{B_R(x_0)}\cap \partial \O_u = \text{\rm Reg}(\partial \O_u)$, by \cite[Proposition 7.12]{BMMTV} (in particular, Step 4 of its proof), we have
$$
\delta^2 \mathcal G_\Lambda(u)[\xi] \leq 2\left[\int_{\O_u}|\nabla (\nabla u\cdot \xi)|^2\,dx-\int_{\partial\O_u}|H|(\nabla u\cdot \xi)^2\,d\HH^{d-1}\right]
$$
where $H$ is the mean curvature of $\partial \O_u\cap \text{\rm supp}\,\xi$.\medskip

We proceed by constructing a specific family of vector fields depending on a parameter $\eps \in (0,1)$. Ultimately, the result follows by passing to the limit as $\eps\to 0^+$.
Up to a change of coordinate, it is not restrictive to assume that $x_0$ is of the form $x_0=(x_0',0)$ and that $e_d$ is the interior normal to the free boundary at $x_0$. Now, consider a smooth function $G_\eps \in C^\infty(B_R(x_0))$ such that
$$
\int_{B_R(x_0)\cap \{x_d>0\}}|\nabla G_\eps|^2\,dx = \eps,\qquad \int_{B_R(x_0)\cap \{x_d=0\}}|G_\eps|\,dx' = 1,
$$
where we used the notation $x=(x',x_d)$, with $x' \in \R^{d-1}$.
Now, it is not restrictive to suppose that $B_R(x_0)\cap \partial \O_u$ is the graph of a $C^{2,\alpha}$-function, so that
	$$\O_u\cap B_R(x_0)=\Big\{(x',x_d)\in B_R'(x_0')\times(-R,R)\ :\ x_d>\phi(x')\Big\}.$$
where $\phi:B_R'(x_0')\to(-R,R)$ is a $C^\infty$ smooth function satisfying
$$
\phi(x_0')=|\nabla_{x'}\phi (x_0')|=0\qquad\mbox{with}\qquad D^2_{x'} \phi(x_0') \,\mbox{ diagonal}.
$$
Moreover, by differentiating twice the free boundary condition $u(x',\phi(x'))=0$, for every $x' \in B_R'(x_0')$, and by exploiting the $(-1)$-homogeneity of the second derivatives of $u$, we deduce that in $B_R'(x_0')$
$$
\begin{aligned}
|\nabla_{x'}\phi(x')| &\leq C \sqrt{\Lambda} \norm{D^2_{x'}u}{L^\infty(B_R'(x_0'))}\\
&\leq C \frac{\sqrt{\Lambda}}{R}\norm{D^2_{x'}u}{L^\infty(B_1'(x_0'/R))}\leq C \frac{\sqrt{\Lambda}}{R}\norm{D^2_{x}u}{L^\infty(\partial B_1)}\\
\end{aligned}
$$
with $C>0$ a dimensional constant.
Then, if we consider $g_\eps \in C^\infty_c(B_R(x_0))$ such that  $$
g_\eps(x',x_d) = G_\eps\left(x',x_d-\phi(x')\right),\qquad \mbox{with }(x',x_d)\in B_R(x_0)\cap \O_u,
$$
we infer that
$$
\int_{B_R(x_0)\cap \O_u}|\nabla g_\eps|^2\,dx \leq C \frac{\Lambda}{R^2}\norm{D^2_{x}u}{L^\infty(\partial B_1)}^2\eps,$$ $$\int_{B_R(x_0)\cap \partial \O_u}|g_\eps|\,d\HH^{d-1} \leq C \left(1+\frac{\Lambda}{R^2}\norm{D^2_{x}u}{L^\infty(\partial B_1)}^2\right).
$$
Finally, since $\partial \O_u$ is homogeneous, if we consider
\be\label{e:eta}
\xi(x)=\eps^{d-1}g_\eps(\eps x)\nabla u(x),
\ee
we get
$$
\begin{aligned}
|\partial^2 \mathcal G_\Lambda(u)[\xi]| &\leq 2\Lambda \left|\eps^{2d-2}\int_{B_{\frac{R}{\eps}}\left(x_0/\eps\right)\cap \O_u}|\nabla (g_\eps(\eps x))|^2\,dx-\eps^{2d-2} \int_{B_{\frac{R}{\eps}}\left(x_0/\eps\right)\cap \partial\O_u}|H(x)|g_\eps(\eps x)^2\,d\HH^{d-1}\right|\\
&\leq 2\Lambda\eps^d\left|\int_{B_{R}(x_0)\cap \O_u}|\nabla g_\eps|^2\,dx-\int_{B_{R}(x_0)\cap \partial\O_u}|H(x)|g_\eps^2\,d\HH^{d-1}\right|\\
&\leq 2\Lambda \left(1+\frac{\Lambda}{R^2}\norm{D^2_{x}u}{L^\infty(\partial B_1)}^2\right)\left(1+ \frac{1}{R}\norm{H}{L^\infty(\partial B_1\cap \partial \O_u)}\right)\eps^d \quad \longrightarrow 0^+
\end{aligned}
$$
as $\eps \to 0^+$. Naturally, in the previous inequality we used that the $1$-homogeneity of $u$ implies that $H$ is a $-1$-homogeneous in $\R^d$.\\

Finally, by choosing in \eqref{e:dai} the vector field $\xi \in C^\infty_c(\R^d;\R^d)$ defined as \eqref{e:dai}, we get
$$
\delta^2 \mathcal G_\Lambda(u)[\eta] \geq - \left(\int_{\O_u} \dive\eta\,dx\right)^2 \delta^2 \mathcal{G}_\lambda(u)[\xi] \geq -C \left(\int_{\O_u} \dive\eta\,dx\right)^2 \eps^d
$$
which leads to the stability condition \eqref{e:global-stable-solutions-main}, as $\eps\to 0^+$.
\section{On the Aguilera-Caffarelli-Spruck functional}\label{s:agcafs}
As a consequence of \cref{s:AC} and the analysis in \cite[Section 2]{BMMTV} and \cite[Section 4]{BMMTV}, we finally deduce an estimate on the dimension of the singular part of the boundary of the optimal sets arising in a heat conduction problem studied in \cite{agcasp} by Aguilera, Caffarelli and Spruck.
\subsection{Preliminary results on the optimal shape}\label{s:prel} In
\cite{agcasp} the authors proved the existence of an optimal (open) set $\O\subset D$ that minimizes  \eqref{e:F2.2} and for which the following holds.
\begin{itemize}
\item[$\qquad(\mathcal H1)$] The state function $u_\O, v_\O$ are Lipschitz continuous and non-negative in $D$. %and smooth in a neighborhood of the fixed boundary $\partial D$;
\item[$\qquad(\mathcal H2)$] There exists $\eps_0>0, r_0>0$ such that
	$$\eps_0|B_r|\leq |B_r(x_0)\cap \Omega| \leq (1-\eps_0)|B_r|,$$
for every ball $B_r(x_0)\subset D$ of radius $r\le r_0$ centered on $\partial\Omega\cap D$. Moreover, for every $x_0\in \partial \Omega\cap D$, $r \in (0,r_0)$, it holds
\be\label{e:ut}
|\{0<u_\O<rt\}\cap B_r(x_0)|\leq Ct|B_r|,
\ee
for every $t>0$; the same estimate \eqref{e:ut} being valid also for the function $v_\O$.
\item[$\qquad(\mathcal H3)$] The functions $u_\O, v_\O$ are non-degenerate, that is, there is a constant $C>0$ for which
	$$\frac{1}{r^{d-1}}\int_{\partial B_r(x)}u_\Omega\,d\HH^{d-1}\ge \frac{1}{C}\,r,\qquad
\frac{1}{r^{d-1}}\int_{\partial B_r(x)}v_\Omega\,d\HH^{d-1}\ge \frac{1}{C}\,r
$$
for every $x\in\overline \Omega$ and every $B_r(x)\subset D$.
\item[$\qquad(\mathcal H4)$] There is a constant $\tilde M>0$ such that, for every $B_{2r}(x_0)\subset D$, we have the bound
	\begin{equation*}
	0\le \bigg|\int_{B_{2r}(x_0)}\nabla u_\Omega\cdot\nabla \varphi\,dx\bigg| + \bigg|\int_{B_{2r}(x_0)}\nabla v_\Omega\cdot\nabla \varphi\,dx\bigg|\le \tilde{M} \|\varphi\|_{L^\infty(B_{2r}(x_0))},
	\end{equation*}
for every $\varphi\in C^{0,1}_c(B_{2r}(x_0))$, where $C^{0,1}_c(B_{2r}(x_0))$ is the space of Lipschitz functions with compact support in $B_{2r}(x_0)$.
	\item[$\qquad(\mathcal H5)$] $\Omega$ is an NTA domain, so the Boundary Harnack Principle holds on $\Omega$.
\end{itemize}

%Then, in terms of the decomposition into $\partial\Omega\cap D=\text{\rm Reg}(\partial\Omega)\cup\text{\rm Sing}(\partial\Omega)$, we can rewrite the regularity theorem of Aguilera-Caffarelli-Spruck in \cite{agcasp} (for the penalized problem) as follows:
%\begin{theo}[\cite{agcasp}]
%	If $\Omega$ is a solution to \eqref{e:heat-conduction-problem} in a domain $D\subset\R^d$, then $\text{\rm Reg}(\partial\Omega)$ is a relatively open subset of $\partial\Omega\cap D$ and (locally) a $C^\infty$ manifold.
%\end{theo}	
%We stress that in \cite[Section 6]{BMMTV}, by using higher order Boundary Harnack inequality, we proved a similar result on milder assumptions on the state equations satisfied by $u_\O$ and $v_\O$.
\subsection{Blow-up analysis and dimension reduction}
Using the blow-up analysis from \cite[Section 4]{BMMTV} and by adapting the argument from \cite[Lemma 8.2]{BMMTV} to the stability along volume-preserving variations (see \cref{t:var}), we get the following compactness result.
\begin{lemma}\label{l:blow}
Let $\O$ be an optimal set for \eqref{e:heat-conduction-problem} and let $u:=u_\O$ and $v:=v_\O$ be the state functions from \eqref{e:heat-conduction-u-Omega} and \eqref{e:heat-conduction-v-Omega}. Given $x_0 \in \partial \O\cap D$ and $r>0$, we set, as usual,
$$u_{x_0,r}(x):=\frac{1}{r}u(x_0+r x)\qquad\text{and}\qquad v_{x_0,r}(x):=\frac{1}{r}v(x_0+r x)\,,$$
for a choice of the parameter so that $B_r(x_0)\subset D$.
Then, there exists a sequence $r_k\to 0$ such that
$$\lim_{k\to\infty}u_{x_0,r_k}=u_0\qquad\text{and}\qquad \lim_{k\to\infty}v_{x_0,r_k}=v_0\,$$
locally uniformly in $\R^d$ and strongly in $H^1_{loc}(\R^d)$. Moreover, the following holds:
\begin{enumerate}
  \item[\rm(1)] $u_0$ and $v_0$ are proportional\,\rm;\it
  \item[\rm(2)] there exists $\lambda>0$ such that $\lambda u_0$ is a global stable solution of the one-phase problem along mean-zero variations, in the sense of \cref{def:global-stable-solutions-meanzero}.
\end{enumerate}
\end{lemma}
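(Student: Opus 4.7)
The strategy is to adapt the blow-up analysis of \cite[Section 4]{BMMTV} and the stability passage of \cite[Lemma 8.2]{BMMTV} to the volume-preserving framework supplied by \cref{t:var}. The proof proceeds in three steps: compactness, proportionality, and passage of the stability condition to the limit.

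\emph{Compactness.} By $(\mathcal{H}1)$ both $u$ and $v$ are Lipschitz in $D$, hence $u_{x_0,r}$ and $v_{x_0,r}$ are uniformly Lipschitz on every compact set of $\R^d$. Arzelà--Ascoli gives a common subsequence $r_k\to 0$ along which $u_{x_0,r_k}\to u_0$ and $v_{x_0,r_k}\to v_0$ locally uniformly. Using the two-sided density estimates and \eqref{e:ut} from $(\mathcal{H}2)$, the non-degeneracy $(\mathcal{H}3)$, and the divergence bound $(\mathcal{H}4)$, one upgrades this, exactly as in \cite[Section 4]{BMMTV}, to strong $H^1_{\loc}(\R^d)$ convergence, together with Hausdorff convergence of the positivity sets and $L^1_{\loc}$ convergence of their characteristic functions; in particular $0\in\partial\O_{u_0}=\partial\O_{v_0}$.

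\emph{Proportionality and definition of $\lambda$.} Since $\O$ is NTA by $(\mathcal{H}5)$, the Boundary Harnack Principle applies to the pair $u,v$ of non-negative functions that are harmonic in $\O$ and vanish on $\partial\O\cap D$. Thus the ratio $u/v$ extends Hölder-continuously to $\partial\O\cap D$ near $x_0$, and the limit $\mu:=\lim_{x\to x_0,\,x\in\O} u(x)/v(x)$ exists and is strictly positive. As the ratio is scale-invariant, $u_{x_0,r_k}/v_{x_0,r_k}\to\mu$ locally uniformly on $\O_{v_0}$, yielding $u_0=\mu\,v_0$. Setting $\lambda:=\mu^{-1/2}$ and $w_0:=\lambda u_0$, one obtains the key identity $\nabla u_0\cdot\nabla v_0=|\nabla w_0|^2$ in $\O_{u_0}$, which is precisely what reduces the Aguilera--Caffarelli--Spruck functional to the Alt--Caffarelli energy in the blow-up.

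\emph{Passage of the stability to the limit.} Conditions $(\mathcal{A}1)$--$(\mathcal{A}5)$ for $w_0$ transfer from $(\mathcal{H}1)$--$(\mathcal{H}4)$ via Step~1, again as in \cite[Section 4]{BMMTV}. To verify the variational conditions \eqref{e:stab.mean}, fix $\tilde\eta,\tilde\xi\in C^\infty_c(\R^d;\R^d)$ with disjoint supports and $\int_{\O_{u_0}}\dive\tilde\xi\,dy\neq 0$, and set $\eta_k(x):=r_k\tilde\eta((x-x_0)/r_k)$, $\xi_k(x):=r_k\tilde\xi((x-x_0)/r_k)$. For $k$ large these are supported in $D$, have disjoint supports, and satisfy $\int_\O\dive\xi_k\,dx=r_k^d\int_{\O_{r_k}}\dive\tilde\xi\,dy\neq 0$, so \cref{p:first} and \cref{p:second} apply. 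A change of variables $x=x_0+r_k y$ yields the scaling
\be\label{e:scaling-first-var}
\frac{1}{r_k^d}\,\delta\mathcal F_\Lambda(\O)[\eta_k]
=\int_{\O_{r_k}}\nabla u_{x_0,r_k}\cdot(\delta A)[\tilde\eta]\,\nabla v_{x_0,r_k}\,dy
+\Lambda\int_{\O_{r_k}}\dive\tilde\eta\,dy,
\ee
and analogously for $\delta^2\mathcal F_\Lambda$, with the corrector equations \eqref{e:deltau} rescaling consistently to the corresponding equations on $\O_{r_k}$. Letting $k\to\infty$ and using the strong $H^1_{\loc}$ convergence of the state functions (which, via the standard well-posedness and continuous dependence of \eqref{e:deltau}, produces strong $\dot H^1_{\loc}$ convergence of the correctors to $\delta u_0$ and $\delta v_0=\mu^{-1}\delta u_0$), together with the $L^1_{\loc}$ convergence of the characteristic functions and the identity $\nabla u_0\cdot\nabla v_0=|\nabla w_0|^2$, transforms \cref{p:first} and \cref{p:second} into the two conditions of \eqref{e:stab.mean} for $w_0$.

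\textbf{Main obstacle.} The delicate point is the passage to the limit of the corrector equations: one needs uniform-in-$k$ control of $\delta u_{r_k}$, $\delta v_{r_k}$ in $\dot H^1_{\loc}(\R^d)$ despite the fact that $\partial\O_{r_k}$ is, a priori, only Lipschitz away from singular points, and one must identify the limits as the correctors associated with $u_0, v_0$ in $\O_{u_0}$. This is precisely where the density estimates of $(\mathcal{H}2)$ and the NTA condition $(\mathcal{H}5)$ are essential, as they guarantee that limits of functions in $\dot H^1_0(\O_{r_k})$ lie in $\dot H^1_0(\O_{u_0})$ and prevent energy loss on the converging free boundaries.
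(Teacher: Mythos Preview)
Your overall strategy---compactness via $(\mathcal H1)$--$(\mathcal H4)$, proportionality via the Boundary Harnack Principle $(\mathcal H5)$, and passage of the first and second variations to the blow-up limit by rescaling the test fields---matches the paper's approach, and the scaling computations you outline are correct. However, there is a genuine gap: you never verify that the Lagrange multiplier $\Lambda$ from \cref{t:var} is \emph{strictly} positive. Your constant $\lambda=\mu^{-1/2}$ is merely a normalization ensuring $\nabla u_0\cdot\nabla v_0=|\nabla w_0|^2$; after this rescaling the limit conditions you obtain are exactly $\delta\mathcal G_\Lambda(w_0)[\eta]=0$ and the corresponding second-variation inequality, with the \emph{same} $\Lambda$ coming from \cref{p:first}. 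But \cref{p:first} only gives $\Lambda\ge 0$, whereas \cref{def:global-stable-solutions-meanzero} (and the whole downstream theory leading to \cref{t:AC}) requires $\Lambda>0$. If $\Lambda=0$ the measure term disappears and the conclusion $(2)$ of the lemma simply fails.

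The paper addresses this as a separate step: assuming $\Lambda=0$, the first-variation identity in the limit reduces to the pure stationarity condition $\int_{\O_{u_0}}\big(2\nabla u_0\cdot D\eta\,\nabla u_0-|\nabla u_0|^2\dive\eta\big)\,dx=0$ for all $\eta\in C^\infty_c(\R^d;\R^d)$, with $u_0$ harmonic in $\O_{u_0}$. Invoking \cite[Proposition~11.4]{V}, this forces $|B\setminus\O_{u_0}|=0$ for every ball $B$, contradicting the exterior density estimate inherited from $(\mathcal H2)$. You should insert this argument (or an equivalent one) before concluding that $w_0$ satisfies \eqref{e:stab.mean}.
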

\begin{proof}
The proof consists of applying the general program from \cite{BMMTV} (see also \cite{mtv.BH, mtv.flat}) by keeping track of the stability condition of \cref{t:var} along volume-preserving variations. Therefore, we simply sketch the key points of this strategy.\\

First, by exploiting assumptions $(\mathcal H1)-(\mathcal H5)$, we get that there exists $r_k\searrow 0^+$ such that $u_{x_0,r_k}$ and $v_{x_0,r_k}$ converge strongly in $H^1_{loc}(\R^d)$, and locally uniformly in $\R^d$, respectively to $u_0$ and $v_0$. Moreover, the sets $\frac1{r_k}(\O-x_0)$ converge to the limit set $\O_0=\{u_0>0\}=\{v_0>0\}$, with respect to the local Hausdorff distance (see \cite[Section 4]{BMMTV} for more details on the blow-up procedure).\\

\noindent \it Step 1. Stability of with respect to blow-up sequences. \rm
First, by the compactness result we deduce that the validity of $(\mathcal H1)-(\mathcal H5)$ implies that the limits $u_0$ and $v_0$ satisfy the assumptions $(\mathcal A1)-(\mathcal A5)$ of \cref{def:global-stable-solutions}. Then, by \cref{t:var}, there exists $\Lambda \geq0 $ such that the functional
$$
\mathcal{F}_\Lambda(\O)=\mathcal{F}(\O) + \Lambda |\O|
$$
satisfies
\begin{equation}\label{e:1}
\delta \mathcal F_\Lambda(\O)[\eta]=0\qquad\text{and}\qquad \delta^2 \mathcal F_\Lambda(\O)\left[\eta - \frac{\int_\O \dive\eta\,dx}{\int_\O \dive\xi\,dx}\xi\right]\ge0\,,
\end{equation}
for every couple $\eta,\xi\in C^\infty_c(\R^d;\R^d)$ of smooth vector fields with disjoint supports and $\int \dive \xi\not=1$ (see \eqref{e:deriv-F} and \cref{t:var} for the definition of $\mathcal F_\Lambda$). Then, by combining the notion of stable solution along volume-preserving variations \eqref{e:1} with the argument in \cite[Lemma 8.2]{BMMTV}, we obtain that the first and the second variations of $\F_\Lambda$ pass to the limit, i.e.
\begin{equation}
\delta \mathcal F_\Lambda(\O_0)[\eta]=0\qquad\text{and}\qquad \delta^2 \mathcal F_\Lambda(\O_0)\left[\eta - \frac{\int_{\O_0} \dive\eta\,dx}{\int_{\O_0} \dive\xi\,dx}\xi\right]\ge0\,.
\end{equation}
Since in this case the domain $\O_0$ is unbounded, we stress that $\delta u_0$ and $\delta v_0$ solve the PDEs \begin{align*}
\begin{aligned}
-\Delta (\delta u_0)=\dive\big((\delta A)\nabla u_0\big)\quad\text{in }\O_0\ ,\qquad \delta u_0\in \dot H^1_0(\O_0)\,;\\
-\Delta (\delta v_0)=\dive\big((\delta A)\nabla v_0\big)\quad\text{in }\O_0\ ,\qquad \delta v_0\in \dot H^1_0(\O_0)\,,
\end{aligned}
\end{align*}
in the sense of \cite[Section 7.1]{BMMTV} (see also \cref{r:deltaA}).\\

\noindent {\it Step 2. $u_0$ and $v_0$ are proportional.}  Since by $(\mathcal{H}5)$ the Boundary Harnack Principle holds on the optimal set $\O$, the ratio of the two state variables is locally H\"{o}lder continuous and positive. Therefore, the blow-up limits previously defined are proportional, and $v_0=Cu_0$ for some $C>0$ (see the ``second blow-up'' in \cite[Section 4.3]{BMMTV}).\\

\noindent \it Step 3. The Lagrange multiplier $\Lambda$ is positive. \rm
We notice that if $\Lambda >0$, then, up to multiplying $u_0$ by a constant, the inequalities \eqref{e:1} correspond precisely to the stability condition \eqref{e:stab.mean} in \cref{def:global-stable-solutions-meanzero}, which concludes the proof of the Lemma.\\

Therefore, we only need to show that $\Lambda\neq0$. Suppose by contradiction that $\Lambda=0$. Then, by exploiting the proportionality of $u_0$ and $v_0$, we have that:
\begin{itemize}
\item[(\rm a)] $u_0\in H^1_\loc(\R^d)$ is a non-negative Lipschitz function satisfying
    $$
    \Delta u_0 =0 \quad \mbox{in }\O_0\,\rm;
    $$
\item[(\rm b)] $u_0$ satisfies the extremality condition
$$
0=\int_{\O_0} \nabla u_0\cdot(\delta A)\nabla u_0  \, dx=
\int_{\O_0} \left(2\nabla u_0 \cdot D\eta(\nabla u_0) - |\nabla u|^2\dive\eta\right)  \, dx\,,
$$
for every smooth vector field with compact support $\eta \in C^\infty_c(\R^d;\R^d)$.
\end{itemize}
Then, by \cite[Proposition 11.4]{V} we get that
$$
|B\setminus \O_0|=0,\quad\mbox{for every ball }B\subset \R^d,
$$
in contradiction with the density estimates $(\mathcal{A}4)$.
\end{proof}	

Although in \cite{agcasp} the authors proved the local $C^\infty$ regularity for the regular part $\text{\rm Reg}(\partial \O_u \cap D)$ of the free boundary, in order to apply the program defined in \cite{BMMTV} it is more convenient to reformulate the notion of ``regular part'' of the free boundary as follows.

\subsection{Proof of \cref{t:heat-conduction-singular}}\label{sub:heat-conduction-main-result} Let now $\Omega$ be the optimal set for \eqref{e:heat-conduction-problem} from \cref{t:ACS}. We notice that the regular and the singular parts of the free boundary $\partial\Omega\cap D$ defined in \cite{agcasp} can be equivalently characterized as follows.
\begin{enumerate}
\item[\rm(1)]The regular part, $\text{\rm Reg}(\partial\Omega)$, is the set of points $x_0\in\partial\Omega\cap D$ at which there exists a blow up limit $u_0,v_0:\R^d\to\R$ of the form
$$u_0(x)=\alpha(x\cdot\nu)_+\quad\text{and}\quad v_0(x)=\beta(x\cdot\nu)_+\,,$$
for some unit vector $\nu\in\R^d$ and some constants $\alpha>0$ and $\beta>0$ such that
$\alpha\beta=\Lambda$\,, where $\Lambda$ is precisely the Lagrange multiplier from \cref{t:var}.\rm
\item[\rm(2)] The remaining part of the free boundary is the singular set:
$$\text{\rm Sing}(\partial\Omega):=(\partial\Omega\cap D)\setminus \text{\rm Reg}(\partial\Omega).$$
\end{enumerate}
%\subsection{Proof of \cref{t:heat-conduction-singular}}
Now, proceeding as in \cite[Theorem 8.1]{BMMTV}, we use the epsilon-regularity result \cite{mtv.flat} and the classical argument for estimating the dimension of the singular set via the Federer's reduction principle (see for instance \cite[Section~8]{BMMTV} and \cite[Chapter~10]{V}), we obtain that the dimension of the singular set is preserved under blow-up limits.
%We can reason exactly as in \cite[Theorem 8.1]{BMMTV}, up to modifications due to the presence of the volume constraint.
We notice that here, as in the penalized case considered in \cite[Appendix A]{BMMTV}, the vectorial nature of the problem \eqref{e:heat-conduction-problem} requires performing a ``double blow-up''.
Indeed, by applying the blow-up analysis consecutively, we first reduce the problem to the scalar case (i.e. the blow-up limits are proportional, \cref{l:blow}).
Then, since the first variation is preserved under blow-up, we obtain that the monotonicity formula of Weiss holds (see \cite[Section 4.4]{BMMTV}) for the first blow-up.
As a consequence, the second blow-up is a $1$-homogeneous function. By the last part of \cref{l:blow}, this function is a global stable critical point of the one-phase problem along mean-zero variations.
At this point, \cref{t:heat-conduction-singular} follows by applying \cref{t:AC}.


\begin{thebibliography}{999}
\bibitem{agalca}
N.~E.~Aguilera, H.~W.~Alt, L.~A.~Caffarelli,
{\it An optimization problem with volume constraint.}
SIAM J. Control Optim. (24) (1986), no. 2, 191–-198.

\bibitem{agcasp} N.~E.~Aguilera, L.~A.~Caffarelli, J.~Spruck,
{\it An optimization problem in heat conduction.}
Ann. Scuola Norm. Sup. Pisa Cl. Sci. (4) {\bf14} (1987), no. 3, 355--387.

\bibitem{altcaf}H.W.~Alt, L.A.~Caffarelli. {\it Existence and regularity for a minimum problem with free boundary.} J. Reine Angew. Math. {\bf325} (1981), 105--144.	

\bibitem{briancon} T.~Brian\c{c}on. {\it Regularity of optimal shapes for the Dirichlet’s energy with volume constraint.} ESAIM,
Control Optim. Calc. Var. {\bf 10} (2004), 99–122.

\bibitem{brianconlamboley} T.~Brian\c{c}on, J.~Lamboley. {\it Regularity of the optimal shape for the first eigenvalue of the laplacian with volume and inclusion constraints.} Ann. Inst. H. Poincar\'e Anal. Non Lin\'eaire {\bf 26} (4) (2009), 1149--1163.



\bibitem{BMV1}G.~Buttazzo, F.P.~Maiale, B.~Velichkov. {\it Shape optimization problems in control form.} Atti Accad. Naz. Lincei Cl. Sci. Fis. Mat. Natur. {\bf32} (3) (2021), 413--435.

\bibitem{BMMTV}G.~Buttazzo, F.P.~Maiale, D.~Mazzoleni, G.~Tortone, B.~Velichkov. {\it Regularity of the optimal sets for a class of integral shape functionals.} (2022) ArXiv preprint.




\bibitem{cjk}L.~A.~Caffarelli, D.~Jerison, C.E.~Kenig. {\it Global energy minimizers for free boundary problems and full regularity in three dimensions.} Contemp. Math. {\bf 350} Amer. Math. Soc., Providence RI (2004), 83--97.




\bibitem{dsj}D.~De~Silva, D.~Jerison. {\it A singular energy minimizing free boundary.} J. Reine Angew. Math. {\bf 635} (2009), 1--21.


\bibitem{js}D.~Jerison, O.~Savin. {\it Some remarks on stability of cones for the one phase free boundary problem.} Geom. Funct. Anal. {\bf25} (2015), 1240--1257.



\bibitem{mtv.flat}F.P.~Maiale, G.~Tortone, B.~Velichkov. {\it Epsilon-regularity for the solutions of a free boundary system.} Rev. Mat. Iberoam. {\bf39} (2023), no. 5, pp. 1947–1972.

\bibitem{mtv.BH}F.P.~Maiale, G.~Tortone, B.~Velichkov. {\it The Boundary Harnack principle on optimal domains.} Ann. Sc. Norm. Super. Pisa Cl. Sci., to appear, DOI: 10.2422/2036-2145.202112\_003.



\bibitem{Maggi} F.~Maggi. {\it Sets of finite perimeter and geometric variational problems: an introduction to Geometric
Measure Theory}. Cambridge University Press 135 (2012)


\bibitem{V}B.~Velichkov. {\it Regularity of the one-phase free boundaries.} volume 28 of \textit{Lecture Notes of Unione Matematica Italiana.} Springer, 2023.

\bibitem{weiss}G.S.~Weiss. {\it Partial regularity for a minimum problem with free boundary.} J. Geom. Anal. {\bf 9} (2) (1999), 317--326.

\end{thebibliography}
\end{document}